\providecommand{\texorpdfstring}[2]{#1}
\providecommand{\url}[1]{#1}
\renewcommand{\le}{\leqslant}
\renewcommand{\ge}{\geqslant}
\newcommand{\ffrown}{\text{\raisebox{3pt}[0pt][0pt]{$\frown$}}}
\renewcommand{\O}{\underset{\ffrown}{<}}
\newcommand{\OG}{\underset{\ffrown}{>}}
\renewcommand{\P}{\operatorname{\mathsf{P}}} 
\newcommand{\E}{\operatorname{\mathsf{E}}}
\newcommand{\cc}{\mathsf{c}}    
\newcommand{\Ko}{\mathsf{Ko}}
\newcommand{\bW}{\mathsf{bW}}
\newcommand{\R}{\mathbb{R}}                  
\newcommand{\N}{\mathbb{N}}
\newcommand{\al}{\alpha}
\newcommand{\be}{\beta}
\newcommand{\ga}{\gamma}
\newcommand{\Ga}{\Gamma}
\newcommand{\de}{\delta}
\newcommand{\ep}{\epsilon}
\newcommand{\vp}{\varepsilon}
\renewcommand{\th}{\theta}
\newcommand{\Th}{\Theta}
\newcommand{\la}{\lambda}
\newcommand{\si}{\sigma}
\newcommand{\vsi}{\varsigma}
\newcommand{\om}{\omega}
\newcommand{\Om}{\Omega}
\renewcommand{\j}{{\mathbf{j}}}
\newcommand{\x}{{\mathbf{x}}}
\newcommand{\y}{{\mathbf{y}}}
\newcommand{\Y}{{\mathbf{Y}}}
\newcommand{\ZZ}{{\mathbf{Z}}}
\newcommand{\tsi}{{\tilde\sigma}}
\newcommand{\CC}{\mathfrak{C}}
\newcommand{\XX}%{\mathfrak{X}}
{\R^d}
\newcommand{\A}{\mathcal{A}}
\newcommand{\B}{\mathcal{B}}
\newcommand{\D}{\mathcal{D}}
\newcommand{\XXX}{\mathcal{X}}
\newcommand{\BB}{\mathrm{B}}
\newcommand{\xx}{\mathbf{x}}
\newcommand{\X}{\mathbf{X}}
\newcommand{\0}{\mathbf{0}}
\newcommand{\dd}{\operatorname{d}\!}
\newcommand{\sign}{\operatorname{sign}}
\newcommand{\ii}[1]{\operatorname{I}\{#1\}}
\newcommand{\tto}[1]{\underset{#1}\longrightarrow}
\newcommand{\Mf}{M_\ep}
\newcommand{\s}[1]{{^{(#1)}}}
\renewcommand{\ss}[2]{{_{#1}^{(#2)}}}
\renewcommand{\bar}{\overline}
\newtheorem{theorem}{Theorem}
\newtheorem{lemma}[theorem]{Lemma}
\newtheorem{proposition}[theorem]{Proposition}
\theoremstyle{definition}
\newtheorem{remark}[theorem]{Remark}
\newtheorem{example}[theorem]{Example}
\numberwithin{equation}{section}
\numberwithin{theorem}{section}
\begin{document} 

\begin{frontmatter}

\title{Optimal-order uniform and nonuniform bounds on the rate of convergence to normality for maximum likelihood estimators\protect\thanksref{T1}}
\runtitle{Rate of convergence to normality for maximum likelihood estimators}
\thankstext{T1}{\today}

\begin{aug}
 \author{\fnms{Iosif}  \snm{Pinelis}\corref{}\ead[label=e1]{ipinelis@mtu.edu}} 
 \runauthor{I. Pinelis}
 \affiliation{Michigan Technological University}
 \address{Department of Mathematical Sciences\\Michigan Technological University\\Hough\-ton, Michigan 49931\\\printead{e1}}
\end{aug} 

\begin{abstract} 
It is well known that, under general regularity conditions, the distribution of the maximum likelihood estimator (MLE) is asymptotically normal. 
Very recently, bounds of the optimal order $O(1/\sqrt n)$ on the closeness of the distribution of the MLE to normality in the so-called bounded Wasserstein distance were obtained %IP 
\cite{anast-rein_publ,anast-ley}, where $n$ is the sample size. However, the corresponding bounds on the Kolmogorov distance were only of the order $O(1/n^{1/4})$. In this %IP note
paper, bounds of the optimal order $O(1/\sqrt n)$ on the closeness of the distribution of the MLE to normality in the Kolmogorov distance are given, as well as their nonuniform counterparts, which work better %IP for large deviations of 
in tail zones of the distribution of the MLE. These results are based in part on previously obtained general optimal-order bounds on the rate of convergence to normality in the multivariate delta method. The crucial observation is that, under natural conditions, the MLE can be tightly enough bracketed between two smooth enough functions of the sum of independent random vectors, which makes the delta method applicable. It appears that the nonuniform bounds for MLEs in general have no precedents in the existing literature; a special case was recently treated by Pinelis and Molzon %IP
\cite{nonlinear-publ}. The results can be extended to $M$-estimators. %IP or, even more generally, to the estimators that are zeros of estimating functions.  
\end{abstract}

\setattribute{keyword}{AMS}{AMS 2010 subject classifications:}

\begin{keyword}[class=AMS]
\kwd{62F10}
\kwd{62F12}
\kwd%[Primary ]
{60F05}
\kwd{60E15}
%\kwd[; secondary ]%{60E10}
%\kwd{62F03}
%\kwd{62F05}
%\kwd{62G10}
%\kwd{62G20}
\end{keyword}

% 	60F05   	Central limit and other weak theorems
%  	62F12   	Asymptotic properties of estimators

% 	62E17   	Approximations to distributions (nonasymptotic)
%		62E20   	Asymptotic distribution theory

%	62F10   	Point estimation

\begin{keyword}
\kwd{maximum likelihood estimators}
\kwd{Berry--Esseen bounds}
\kwd{delta method}
\kwd{rates of convergence}
\end{keyword}

\end{frontmatter}

\tableofcontents %RMejs

%\footnote{\today}
%
%\today

\section{Introduction}
\label{intro}

Let us begin with the following quote from Kiefer \cite{kiefer68} of 1968: 

\begin{quote} 
a second area of what seem to me important problems to work on
has to do with the fact that we do have, in many settings, quite a good large sample
theory, but we don't know how large the sample sizes have to be for
that theory to take hold. Now, I'm sure most of you are familiar with the
error estimate one can give for the classical central-limit theorem, which goes
by the name of the Berry-Esseen estimate, and which tells you that under
certain assumptions one can actually give an explicit bound on the departure
from the normal distribution of the sample mean for a given sample size,
the error term being of order $1/\sqrt n$. For most other statistical problems, in
fact for almost anything other than the use of the sample mean, we have
nothing. The most obvious example of this (and this is not original with me;
many people have been concerned with this), is the maximum likelihood
estimator in the case of regular estimation. We all know what the asymptotic
distribution is. Can you give explicitly some useful bound on the departure
from the asymptotic normal distribution as a function of the sample size $n$?
It seems to be a terrifically difficult problem.
\end{quote} 

Since then, there has been some significant progress in this direction, especially rather recently. For instance, Berry--Esseen-type bounds of order $1/\sqrt n$ were obtained for $U$-statistics -- see e.g.\ \cite{kor94}; for the Student statistic \cite{bent96,bbg96}; and, even more recently, for rather broad classes of other statistics that depend on the observations in a nonlinear fashion \cite{chen07,nonlinear-publ}. 

As Kiefer pointed out, it is well known that, under general regularity conditions, the distribution of the maximum likelihood estimator (MLE) is asymptotically normal. 
In this paper, we shall consider Berry--Esseen-type bounds of order $1/\sqrt n$ for the MLE. First such bounds were apparently obtained in the paper %by Michel and Pfanzagl 
\cite{michel-pfanzagl71}, followed by \cite{pfanzagl71,pfanzagl73}.   
%
%It is well known that, under general regularity conditions the distribution of the MLE is asymptotically normal. 
Very recently, 
bounds on the closeness of the distribution of the MLE to normality in the so-called bounded Wasserstein distance, $d_{\bW}$, were obtained in \cite{anast-rein_publ}. 
%IP In \cite{anast-ley}, these bounds were improved 
%sharpened and simplified by using a version of the delta method in the rather common case when the MLE $\hat\th$ is expressible as a smooth enough function of a linear statistic of the independent identically distributed (i.i.d.) observations. 
In the rather common special case when the MLE $\hat\th$ is expressible as a smooth enough function of a linear statistic of %IP the 
independent identically distributed (i.i.d.) observations, the bounds obtained in \cite{anast-rein_publ} were sharpened and simplified in \cite{anast-ley} by using a version of the delta method. 
More specifically, it was assumed in \cite{anast-ley} that 
\begin{equation}\label{eq:q(th)}
 q(\hat\th)=\frac1n\,\sum_{i=1}^n g(X_i),  
\end{equation}
where $q\colon\Th\to\R$ is a twice continuously differentiable one-to-one mapping, $g\colon\R\to\R$ is a Borel-measurable function, and the $X_i$'s 
are i.i.d.\  
real-valued r.v.'s.  

It was noted in \cite[Proposition~2.1]{anast-rein_publ} that for any r.v.\ $Y$ %IP and $Z\sim N(0,1)$ 
and a standard normal r.v.\ $Z$ one has $d_\Ko(Y,Z)\le2\sqrt{d_{\bW}(Y,Z)}$, where $d_\Ko$ denotes the Kolmogorov distance. 
This bound on $d_\Ko$ in terms of $d_{\bW}$ is the best possible one, up a constant factor, as shown in \cite{nonlinear-publ}. 
Therefore, even though the bounds on the bounded Wasserstein distance $d_{\bW}$ obtained in \cite{anast-rein_publ,anast-ley} are of the optimal order $O(1/\sqrt n)$, the resulting bounds on the Kolmogorov distance are only of the order $O(1/n^{1/4})$. 
%IP12.06.16
\big(That the order $O(1/\sqrt n)$ is optimal for MLEs is well known; for instance, see the example of the Bernoulli family of distributions given in \cite{michel-pfanzagl71}.\big)

In \cite{nonlinear-publ}, optimal-order bounds of the form $O(1/\sqrt n)$ on the rate of convergence to
normality in the %IP 
general multivariate delta method were given. Those results are applicable when the statistic of interest can be expressed as a smooth enough function of the sum of independent random vectors. 
Accordingly, various kinds of applications were presented in \cite{nonlinear-publ}. 
In particular,  
uniform and nonuniform  
bounds of the optimal order %$O(1/\sqrt n)$ 
on the closeness of the distribution of the MLE to normality %in the Kolmogorov distance 
were obtained in \cite{nonlinear-publ} under conditions similar to the mentioned conditions assumed in \cite{anast-ley}. 

In this paper we present a way to extend those results in \cite{nonlinear-publ} to the general case, without an assumption of the form \eqref{eq:q(th)}, made in \cite{anast-ley,nonlinear-publ}. Of course, in general the MLE cannot be represented as a function of the sum of independent random vectors (see Appendix~\ref{append} for details). However, the crucial observation here is that, under natural conditions, the MLE can be tightly enough bracketed between two such smooth enough functions, which makes the delta method applicable. 
Thus, the present paper is methodologically %substantially 
different from the preceding work on Berry--Esseen-type bounds for the MLE, in that it relies on the general result developed in \cite{nonlinear-publ}, rather than on methods specially designed to deal with the MLE. 

Perhaps more importantly, the new method yields %, 
not only uniform 
%http://dictionary.cambridge.org/us/grammar/british-grammar/word-order-and-focus/not-only-but-also 
bounds (that is, in the Kolmogorov metric) of the optimal order $O(1/\sqrt n)$ 
on the closeness of the distribution of the MLE to normality %,
but also their so-called nonuniform counterparts, which work much 
better for large deviations, that is, in tail zones of the distribution of the MLE -- which are usually of foremost interest in statistical tests. 
Such nonuniform bounds for MLEs in general appear to have no precedents in the existing literature (except that, as stated above, a special case of nonuniform bounds for MLEs was recently treated in \cite{nonlinear-publ}). 

The paper is organized as follows. The general setting of the problem is described in Section~\ref{setting}. The key step of tight enough bracketing of the MLE between two functions of the sum of independent random vectors is made in Section~\ref{bracketing}. General uniform and nonuniform optimal-order bounds from \cite{nonlinear-publ} on the %rate 
convergence rate in the multivariate delta method are presented in Section~\ref{f(bar V)}. In Section~\ref{appl}, we make the bracketing work by applying the general bounds in the multivariate delta method. Yet, this leaves out the problem of bounding %IP up 
%12. Used as an intensifier of the action of a verb: typed up a list. 
%11. Completely; entirely: drank it up in a gulp; fastened up the coat.
%http://www.thefreedictionary.com/up 
a remainder, which is a probability of large deviations of the MLE from the true value of the parameter. It is shown in Section~\ref{remainder} that under natural conditions this remainder is
exponentially fast decreasing (in $n$) and thus asymptotically negligible as compared to the main term on the order of $1/\sqrt n$. 
All these findings are summarized in Section~\ref{concl}, where the main result of this paper is presented, along with corresponding discussion.  
In Appendix~\ref{append}, it is shown that, under general regularity conditions, \eqref{eq:q(th)} (or even a relaxed version of it) implies that the family of densities is a one-parameter exponential one;  
in particular, this allows one to give any number of examples where the main result of the present paper is applicable, whereas the corresponding result in \cite{nonlinear-publ} is not.

\section{General setting}
\label{setting}

Let 
$X,X_1,X_2,\dots$ be random variables (r.v.'s) mapping a measurable space $(\Om,\A)$ to another measurable space $(\XXX,\B)$ and let $(\P_\th)_{\th\in\Th}$ be a parametric family 
of probability measures on $(\Om,\A)$ such that the r.v.'s $X,X_1,X_2,\dots$ are i.i.d.\ with respect to each of the probability measures $\P_\th$ with $\th\in\Th$; here the parameter space $\Th$ is assumed to be a subset of the real line $\R$. %$\R$. 
As usual, let $\E_\th$ denote the expectation with respect to the probability measure $\P_\th$. 
Suppose that for each $\th\in\Th$ the distribution $\P_\th X^{-1}$ of $X$ has a density $p_\th$ with respect to a measure $\mu$ on $\B$. 
Because the extended real line $[-\infty,\infty]$ is compact, for each $n\in\N$ and each point $\xx=\xx_n=(x_1,\dots,x_n)\in\XXX^n$ the likelihood function $\Th\ni\th\mapsto L_\xx(\th):=\prod_{i=1}^n p_\th(x_i)$ has at least one %unique 
generalized maximizer $\hat\th_n(\xx)$ in the closure of the set $\Th$ in $[-\infty,\infty]$, in the sense that $\sup_{\th\in\Th}L_\xx(\th)=\limsup_{\th\to\hat\th_n(\xx)}L_\xx(\th)$. Picking, for each $\xx=(x_1,\dots,x_n)\in\XXX^n$, any one of such generalized maximizers $\hat\th_n(\xx)$, one obtains a map $\Om\ni\om\mapsto\hat\th_n(\X(\om))$, where $\X:=\X_n:=(X_1,\dots,X_n)$; any such map will be denoted here by $\hat\th_n(\X)$ (or simply by %IP 
$\hat\th_n$ or $\hat\th$) and referred to as a maximum likelihood estimator (MLE) of $\th$.   
This is a somewhat more general definition of the MLE than usual, and in general an MLE $\hat\th$ will not have to be a r.v.; that is, it can be non-measurable with respect to the sigma-algebra $\A$. 
However, to simplify the presentation, we shall still refer to sets of the form $\{\hat\th\in J\}:=\{\om\in\Om\colon\hat\th_n(\X(\om))\in J\}$ for Borel sets $J\subseteq\Th$ as events and write $\P_\th(\hat\th\in J)$ implying that the latter expression may and should be understood as either one of the expressions $(\P_\th)^*(\hat\th\in J)$ or $(\P_\th)_*(\hat\th\in J)$, where ${}^*$ and ${}_*$ stand for the corresponding outer and inner measures. Of course, when the map $\hat\th$ is measurable, then one can use the bona fide expressions of the mentioned form $\P_\th(\hat\th\in J)$.

Let $\th_0\in\Th$ be the ``true'' value of the unknown parameter $\th$, such that  
\begin{equation}\label{eq:in Th}
[\th_0-\de,\th_0+\de]\subseteq\Th^\circ	
\end{equation}
for some real $\de>0$, where $\Th^\circ$ denotes the interior of the subset $\Th$ of $\R$. 
For brevity, let 
\begin{equation*}
	\P:=\P_{\th_0}\quad\text{and}\quad\E:=\E_{\th_0}. 
\end{equation*}

For $x\in\XXX$ and $\th\in\Th$, consider the log-likelihood  
\begin{equation*}
	\ell_x(\th):=\ln p_\th(x)
\end{equation*}
and assume the following: 
\begin{enumerate}[(I)]
\item \label{diff} The set $\XXX_{>0}:=\{x\in\XXX\colon p_\th(x)>0\}$ is the same for all $\th\in[\th_0-\de,\th_0+\de]$, and for each $x\in\XXX_{>0}$ the density $p_\th(x)$ and hence the log-likelihood $\ell_x(\th)$ are thrice differentiable in $\th$ at each point $\th\in[\th_0-\de,\th_0+\de]$. 
	\item \label{fisher} Standard regularity conditions hold so that $\E \ell'_X(\th_0)=0$ and $\E \ell'_X(\th_0)^2=-\E \ell''_X(\th_0)%\break 
	=I(\th_0)\in(0,\infty)$, where $I(\th)$ is the Fisher information at $\th$. 
	\item \label{M_2} $%M_2:=
	\E |\ell'_X(\th_0)|^3+
	\E |\ell''_X(\th_0)|^3<\infty$. 
	\item \label{M_3} %\label{thrice} 
%	For $\mu$-almost all $x\in\XXX$, the log-likelihood $\ell_x(\th)$ is thrice differentiable in $\th$ at each point of the interval $[\th_0-\de,\th_0+\de]$, and  
	$\E \sup\limits_{\th\in[\th_0-\de,\th_0+\de]}|\ell'''_X(\th)|^3<\infty$. 
\end{enumerate}

\begin{remark}\label{rem:fisher}
The expectation $\E \ell'_X(\th_0)$, mentioned in condition~\eqref{fisher}, may be understood as $\int_{\XXX_{>0}}p'_x(\th_0)\mu(\dd x)$, where $p_x(\th):=p_\th(x)$; similarly, for the other expectations mentioned in conditions~\eqref{fisher}--\eqref{M_3}. Of course, all the derivatives here are with respect to $\th$. 

Concerning the ``standard regularity conditions'' mentioned in condition~\eqref{fisher}, it will be enough to assume that $\P(\frac\partial{\partial\th}p_\th(X)\ne0)>0$ and for some measurable function $g\colon\XXX_{>0}\to[0,\infty)$ such that $\int_{\XXX_{>0}}g\dd\mu<\infty$ and all $\th\in[\th_0-\de,\th_0+\de]$ and $x\in\XXX_{>0}$ we have $|\frac\partial{\partial\th}p_\th(x)|+|\frac{\partial^2}{\partial\th^2} p_\th(x)|\le g(x)$; see e.g.\ \cite[Lemma~5.3, page~116]{lehmann-estim} and \cite[Lemma~2.4]{rosenthal_AOP} (more general conditions can be given using \cite[Lemma~2.3]{rosenthal_AOP}). 
Then $I(\th)$ will also be continuous in $\th\in[\th_0-\de,\th_0+\de]$. 

Conditions \eqref{diff}--\eqref{M_3} are rather similar to regularity conditions used in related literature; see Remark~\ref{rem:compare} on page~\pageref{rem:compare} for details. 
It appears that these conditions will be generally satisfied provided that $\ell_x(\th)$ is smooth enough in $\th$. 

For instance, let us briefly consider the case when the family of densities $(p_\th)$ is a location family, so that $\ell_x(\th)=\la(x-\th)$ for all $(x,\th)\in\XXX\times\Th=\R^2$, where $\la$ is a smooth enough function. If the densities $p_\th$ have power-like tails, then for some positive real constants $c_+$ and $c_-$ one has
$\la(x)\sim-c_{\pm}\ln|x|$ as $x\to\pm\infty$, 
in which case typically 
$|\la^{(k)}(x)|\sim %\break 
-c_\pm k!|x|^{-k}\ln|x|$ for $k=0,1,\dots$ as 
$x\to{\pm}\infty$. 
So, conditions \eqref{M_2} and \eqref{M_3} will hold, since $|\ell_x^{(k)}(\th)|=|\la^{(k)}(x-\th)|$. 
If the tails densities $p_\th$ are lighter than power-like tails, so that (say) 
$\la(x)\sim-c_{\pm}|x|^\al$ for some real $\al>0$ as $x\to\pm\infty$, 
then typically 
$|\la^{(k)}(x)|\sim %\break 
-c_\pm k!|x|^{\al-k}$ for $k=0,1,\dots$ as 
$x\to{\pm}\infty$, so that conditions \eqref{M_2} and \eqref{M_3} will again hold. 

The case of a scale family is quite similar to that of a location family. Alternatively, the ``scale'' case can be reduced to the ``location'' one by logarithmic rescaling in both $x$ and $\th$. 

At this point, consider also the case when the family of densities $(p_\th)$ is an exponential family,  
%with a natural parameter, 
so that $\ell_x(\th)=w(\th)T(x)+d(\th)$ for some functions $w$, $T$, and $d$ and for all $(x,\th)\in\XXX\times\Th=\R^2$, where the functions $w$ and $d$ are smooth enough, with $w'(\th_0)\ne0$. Then $\ell_x^{(k)}(\th)=w^{(k)}(\th)T(x)+d^{(k)}(\th)$. So, conditions \eqref{M_2} and \eqref{M_3} will hold in this case as well, since $\E|T(X)|^\al=\int_\XXX|T(x)|^\al\exp\{w(\th_0)T(x)+d(\th_0)\}\mu(\dd x)$ for $\al>0$, $|T(x)|^\al=O(e^{hT(x)}+e^{-hT(x)})$ for any given real $\al>0$ and any given nonzero real $h$, 
and the conditions $\th_0\in\Th^\circ$ and $w'(\th_0)\ne0$ imply that $\int_\XXX\exp\{[w(\th_0)+h]T(x)+d(\th_0)\}\mu(\dd x)<\infty$ for all real $h$ close enough to $0$.  \qed
\end{remark}

%!!!!!!!!! stopped here

Let 
\begin{equation}\label{eq:ell}
	\ell_\X(\th):=\sum_{i=1}^n\ell_{X_i}(\th)  
\end{equation}
for $\th\in\Th$, 
the log-likelihood of the sample $\X=(X_1,\dots,X_n)$. 

\section{Tight bracketing of the MLE between two functions of the sum of independent random vectors}
\label{bracketing}
 
Without loss of generality (w.l.o.g.), $\XXX_{>0}=\XXX$. 
%one can replace ``$\mu$-almost all'' in condition \eqref{%thrice
%M_3} by ``each''; otherwise, remove the corresponding null sets from $\XXX$ and $\Om$. 
Then on the event 
\begin{equation}\label{eq:G}
G:=\{\hat\th\in[\th_0-\de,\th_0+\de]\} 	
\end{equation}
($G$ for ``\underline{g}ood event'')
one must have 
\begin{align}
	0=\ell'_\X(\hat\th)
	=&\ell'_\X(\th_0)
	+(\hat\th-\th_0)\,\ell''_\X(\th_0)
		+\frac{(\hat\th-\th_0)^2}2\,\ell'''_\X(\th_0+\xi(\hat\th-\th_0)) \label{eq:0=} \\ 
	=&n\Big(\bar Z-(\hat\th-\th_0)\,\bar U
		+\frac{(\hat\th-\th_0)^2}2\,\bar R\Big)	\label{eq:=n()}
\end{align}
for some $\xi\in(0,1)$, depending on the values of the $X_i$'s, where $\bar Z:=\frac1n\sum_{i=1}^n Z_i$, $\bar U:=\frac1n\sum_{i=1}^n U_i$, $\bar R:=\frac1n\sum_{i=1}^n R_i$, $\bar{R^*}:=\frac1n\sum_{i=1}^n R_i^*$, 
\begin{equation}\label{eq:Z,R}
\begin{gathered}
	Z_i:=\ell'_{X_i}(\th_0),\quad U_i:=-\ell''_{X_i}(\th_0), \\  R_i:=\ell'''_{X_i}(\th_0+\xi(\hat\th-\th_0))\in[-R_i^*,R_i^*],\quad R_i^*:=\sup\limits_{\th\in[\th_0-\de,\th_0+\de]}|\ell'''_{X_i}(\th)|. 
\end{gathered}	
\end{equation}
Note that the $Z_i$'s are i.i.d.\ r.v.'s, and so are the $U_i$'s and the $R_i^*$'s (but not necessarily the $R_i$'s). 

Equalities \eqref{eq:0=} and \eqref{eq:=n()} provide a quadratic equation for $\hat\th$. So, on the event $G$ one has 
\begin{equation}\label{eq:cases}
	\begin{alignedat}{2}
	\hat\th-\th_0&=\frac{\bar Z}{\bar U} &&\ \text{ if }\ \bar R=0\ \&\ \bar U\ne0, \\
	\hat\th-\th_0&\in\{d_+,d_-\} &&\ \text{ if }\ \bar R\ne0, 
	\end{alignedat}
\end{equation}
%\begin{equation}\label{eq:cases}
%	\hat\th-\th_0
%	\left\{
%	\begin{alignedat}{2}
%	&=\frac{\bar Z}{\bar U} &&\ \text{ if }\ \bar R=0\ \&\ \bar U\ne0, \\
%	&\in\{d_+,d_-\} &&\ \text{ if }\ \bar R\ne0, 
%	\end{alignedat}
%	\right.
%\end{equation}
where 
\begin{equation*}
	d_\pm:=\frac{\bar U\pm\sqrt{\bar U^2-2\bar Z\,\bar R}}{\bar R}. 
\end{equation*}

Letting 
\begin{multline}\label{eq:B:=}
	B:=B_1\cup B_2,\quad\text{where} \\  B_1:=\{\bar R\ne0,\;\hat\th-\th_0=d_+\}\cup\{\bar U\le0\} 
	\quad\text{and}\quad B_2:=\{%\big(\bar U\big)^2
	\bar U^2\le 2|\bar Z|\,\bar{R^*}\}   
\end{multline}
($B$ for ``\underline{b}ad event''), 
on the event $B_1\cap\{\bar U>0\}$ 
one has $|\hat\th-\th_0|=|d_+|\ge\bar U/|\bar R|\ge\bar U/\bar{R^*}$, whence, by \eqref{eq:G}, 
\begin{equation}\label{eq:G cap B}
	\P (G\cap B_1)\le\P \Big(\bar U\le0\text{ or }\frac{\bar U}{\bar{R^*}}\le\de\Big)
	=\P\Big(\frac{\bar U}{\bar{R^*}}\le\de\Big)
	=\P\Big(\sum_{i=1}^n(U_i-\de R_i^*)\le0\Big). 
\end{equation}
By definitions \eqref{eq:Z,R} and conditions \eqref{fisher}, \eqref{M_2}, and \eqref{M_3},  
\begin{equation}\label{eq:^{3/2}<infty}
\E U_1>0,\quad\E |Z_1|^3<\infty,\quad \E |U_1|^3<\infty,\quad \E (R_1^*)^3<\infty, 	
\end{equation}
and hence $\E R_1^*<\infty$. 
So, w.l.o.g.\ one may choose $\de>0$ to be small enough so that 
\begin{equation*}
	\de_1:=\E (U_i-\de R_i^*)>0. 
\end{equation*}
Then, letting $Y_i:=(U_i-\de R_i^*)-\E(U_i-\de R_i^*)$ and using \eqref{eq:G cap B}, %IP 
Markov's inequality, and a Rosenthal-type inequality (see e.g.\ \cite[Theorem~1.5]{rosenthal_AOP})
%the von Bahr--Esseen inequality (see e.g.\ \cite{bahr-esseen-AFA_publ}), 
\begin{multline}\label{eq:P(G and B_1)}
	\P (G\cap B_1)\le\P \Big(\sum_{i=1}^n Y_i\le-n\de_1\Big)
	\le\frac1{(n\de_1)^3}\,\E \Big|\sum_{i=1}^n Y_i\Big|^3 \\ 
	\le\frac{n\E |Y_1|^3+\sqrt{8/\pi}\,(n\E Y_1^2)^{3/2}}{(n\de_1)^3}\le\frac\CC{n^{3/2}},  
\end{multline}
%IP for some positive real constant $\CC$, which depends on $\de_1$, $\E Y_1^2$, and $\E |Y_1|^3$ -- but not on $n$.
where $\CC:=\big(\E |Y_1|^3+\sqrt{8/\pi}\,(\E Y_1^2)^{3/2}\big)/\de_1^3$, which depends on $\de_1>0$, $\E Y_1^2<\infty$, and $\E |Y_1|^3<\infty$ -- but not on $n$. 
%\begin{equation}\label{eq:P(G and B_1)}
%	\P (G\cap B_1)\le\P \Big(\sum_{i=1}^n Y_i\le-n\de_1\Big)
%	\le\frac1{(n\de_1)^{3/2}}\,\E \Big|\sum_{i=1}^n Y_i\Big|^{3/2} %\\ 
%	\le\frac{2n\E |Y_1|^{3/2}}{(n\de_1)^{3/2}}=\frac\CC{\sqrt n} 
%\end{equation}
%for some positive real constant $\CC$, which depends on $\de_1$ and $\E |Y_1|^{3/2}$ -- but not on $n$. 

Next, the occurrence of $B_2$ implies that %IP 
% pronoun "that" here stands for ``occurrence''. So, ``that of at least one of the following events'' means ``that of at least one of the following events'' 
of at least one of the following events: $B_{21}:=\{\bar U\le\frac12\,\E U_1\}$, $B_{22}:=\{\bar{R^*}\ge1+\E R_1^*\}$, or 
$B_{23}:=\{|\bar Z|\ge\frac18\,(\E U_1)^2/(1+\E R_1^*)\}$. 
So,
\begin{equation}\label{eq:B_2j}
	\P(B_2)\le\P(B_{21})+\P(B_{22})+\P(B_{23}). 
\end{equation} 
In view of \eqref{eq:^{3/2}<infty}, 
the bounding of each of the probabilities $\P(B_{21})$, $\P(B_{22})$, $\P(B_{23})$ is quite similar to %IP that 
the bounding of $\P (G\cap B_1)$ in \eqref{eq:P(G and B_1)}
% the specific bounding; cf. e.g. "On the bounding of limit multipliers for combined loading" at rspa.royalsocietypublishing.org/content/466/2114/493 
%bounding each of the probabilities $\P(B_{21})$, $\P(B_{22})$, $\P(B_{23})$ is quite similar to bounding $\P (G\cap B_1)$ in \eqref{eq:P(G and B_1)}
 -- because \break
$\P(B_{21})=\P(\sum_{i=1}^n Y_{i,21}\le-n\de_{21})$, $\P(B_{22})=\P(\sum_{i=1}^n Y_{i,22}\ge n\de_{22})$, and  $\P(B_{23})=\P(\sum_{i=1}^n |Y_{i,23}|\ge n\de_{23})$, where 
$Y_{i,21}:=U_i-\E U_1$, $\de_{21}:=\frac12\,\E U_1>0$, 
$Y_{i,22}:=R_i^*-\E R_1^*$, $\de_{22}:=1>0$, 
$Y_{i,23}:=Z_i-\E Z_1=Z_i$, $\de_{23}:=\frac18\,(\E U_1)^2/(1+\E R_1^*)>0$. 

Thus, by \eqref{eq:B:=}, \eqref{eq:P(G and B_1)}, and \eqref{eq:B_2j}, 
\begin{equation}\label{eq:P(G and B)}
	\P (G\cap B)\le\P(G\cap B_1)+\P(B_2)\le\frac\CC{n^{3/2}} . 
\end{equation}

On the other hand, if $\bar R\ne0$ and $\bar U>0$, then $d_-=\frac{2\bar Z}{\bar U+\sqrt{\bar U^2-2\bar Z\,\bar R}}$; here, the condition $\bar U>0$ was used only to ensure that the denominator of the latter ratio is nonzero. Hence, on the event $G\setminus B$ one has  
\begin{equation}\label{eq:hath-th0=}
	\bar U>0 \quad\text{and}\quad 
	\hat\th-\th_0=\frac{2\bar Z}{\bar U+\sqrt{\bar U^2-2\bar Z\,\bar R}}\in[T_-,T_+],   
\end{equation}
where 
\begin{equation}\label{eq:T_pm}
	T_\pm:=\frac{2\bar Z}{\bar U+\sqrt{\bar U^2\mp2|\bar Z|\,\bar{R^*}}};  
\end{equation}
note that, when $\bar R=0$ and $\bar U>0$, the expression of $\hat\th-\th_0$ in \eqref{eq:hath-th0=} is in agreement with the corresponding expression in \eqref{eq:cases}.  

Now that the desired bracketing %IP relation $\in$ 
of $\hat\th-\th_0$ between $T_-$ and $T_+$ is obtained in \eqref{eq:hath-th0=}, we are ready to apply some of the mentioned general results of \cite{nonlinear-publ}, presented in the next section. 

\section{General uniform and nonuniform bounds from \texorpdfstring{\cite{nonlinear-publ}}{} on the rate of convergence to normality for smooth nonlinear functions of sums of independent random vectors}
\label{f(bar V)}

The standard normal distribution function (d.f.) will be denoted by $\Phi$. 
For any $\R^d$-valued random vector $\zeta$, we use the norm notation 
\begin{equation*}
 \|\zeta\|_p:=\big(\E\|\zeta\|^p\big)^{1/p}\text{ for any real $p\ge1$}, 
\end{equation*}
where $\|\cdot\|$ denotes the Euclidean norm on $\R^d$. 

Take any Borel-measurable functional $f\colon\XX\to\R$ satisfying the following smoothness condition: there exist $\ep\in(0,\infty)$, $\Mf\in(0,\infty)$, and a continuous linear functional $L\colon\XX\to\R$ such that
\begin{align}\label{eq:smooth}
 |f(\x)-L(\x)|\le\frac \Mf2\,\|\x\|^2\text{ for all $\x\in\XX$ with }\|\x\|\le\ep.
\end{align}
Thus, 
$f(\0)=0$ and $L$ necessarily coincides with the first Fr\'echet derivative, $f'(\0)$, of the function $f$ at $\0$. 
Moreover, for the smoothness condition \eqref{eq:smooth} to hold, it is enough that 
\begin{equation}\label{eq:M^*}
	\Mf\ge\Mf^*:=\sup\bigg\{\frac1{\|\x\|^2}\,\bigg|\frac{\dd^2}{\dd t^2}\,f(\x+t\x)\Big|_{t=0}\bigg|\colon \x\in\XX,\,0<\|\x\|\le\ep\bigg\}%IP. 
	;  
\end{equation}
%IP At that, 
it is not necessary that $f$ be twice differentiable at $\0$. E.g., if $d=1$ and $f(x)=\frac{x}{1+|x|}$ for $x\in\R$, then $f(0)=0$, $f'(0)=1$, and $f''(x)=-\frac{2\sign x}{(1+|x|)^3}$ for real $x\ne0$; so, \eqref{eq:smooth} holds for any real $\ep>0$ with $L(x)\equiv x$ and $\Mf=2$, whereas $f''(0)$ does not exist.  

\bigskip
%\begin{center}
%***	
%\end{center}

Let
$%\begin{equation*}
 V,V_1,\dotsc,V_n\text{ be i.i.d.\ random vectors}
$ %\end{equation*}
in $\XX$, with $\E V=\0$ and  
\begin{equation*}
 \bar V:=\frac{1}{n}\sum_{i=1}^nV_i. 
\end{equation*} 
Further let
\begin{equation}\label{eq:tsi,v_p,vsi_p}
 \tsi:=\|L(V)\|_2,\quad v_3:=\|V\|_3,\quad\text{and}\quad\vsi_3:=\frac{\|L(V)\|_3}{\tsi}. 
\end{equation}

\begin{theorem}\label{th:nonlin}
\ \kern-9pt\emph{\cite{nonlinear-publ}}
Suppose that \eqref{eq:smooth} holds, and that $\tsi>0$ and $v_3<\infty$. 
Then for all $z\in\R$
\begin{equation}\label{eq:f(S).iid}
 \Big|\P\Big(\frac{f(\bar V)}{\tsi /\sqrt n}\le z\Big)-\Phi(z)\Big|\le\frac{\CC}{\sqrt{n}}, 
\end{equation}
where $\CC$ is a finite positive expression that depends only on the function $f$ (through \eqref{eq:smooth}) and the moments $\tsi$, $\vsi_3$, and $v_3$. 
Moreover, for any $\om\in(0,\infty)$ and for all 
\begin{equation}\label{eq:z,iid}
z\in\bigl(0,\om\,\sqrt{n}\,\bigr]
\end{equation}
one has
\begin{align}
\label{eq:f(S).iid.power} 
 \Big|\P\Big(\frac{f(\bar V)}{\tsi /\sqrt n}\le z\Big)-\Phi(z)\Big|  &\le\frac{\CC_\om}{z^3\,\sqrt{n}}, 
\end{align}
where $\CC_\om$ is a finite positive expression that depends only on the function $f$ (through \eqref{eq:smooth}), the moments $\tsi$, $\vsi_3$, and $v_3$, and also on $\om$. 
\end{theorem}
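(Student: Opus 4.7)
The plan is to reduce the nonlinear statistic $f(\bar V)$ to the linear statistic $L(\bar V)$ via the smoothness condition \eqref{eq:smooth}, then apply classical (uniform and nonuniform) Berry--Esseen bounds to the i.i.d.\ sum $L(\bar V)=\frac1n\sum_{i=1}^n L(V_i)$, and finally control the truncation and quadratic-remainder terms via moment inequalities. Since $L$ is a continuous linear functional and the $V_i$ are centered i.i.d., $\var L(\bar V)=\tsi^2/n$ and $\E|L(V)|^3=(\vsi_3\tsi)^3$, so the classical Berry--Esseen theorem gives a uniform bound of order $\vsi_3^3/\sqrt n$ on $|\P(L(\bar V)\le z\tsi/\sqrt n)-\Phi(z)|$, and its nonuniform counterpart gives an analogous bound of order $\vsi_3^3/(z^3\sqrt n)$.

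First I would establish the sandwich. On $\{\|\bar V\|\le\ep\}$ condition \eqref{eq:smooth} gives $|f(\bar V)-L(\bar V)|\le\frac{\Mf}{2}\|\bar V\|^2$, so for any $\eta>0$
\begin{equation*}
\P\bigl(f(\bar V)\le z\tsi/\sqrt n\bigr)\le\P\bigl(L(\bar V)\le z\tsi/\sqrt n+\eta\bigr)+\P\bigl(\tfrac{\Mf}{2}\|\bar V\|^2>\eta\bigr)+\P(\|\bar V\|>\ep),
\end{equation*}
together with the matching lower bound. For the uniform claim \eqref{eq:f(S).iid} take $\eta\asymp 1/\sqrt n$; the normal-smoothing estimate $|\Phi(z+\eta\sqrt n/\tsi)-\Phi(z)|\le\eta\sqrt n/(\tsi\sqrt{2\pi})$ keeps the main Berry--Esseen term at order $1/\sqrt n$, while Markov's inequality applied to $\E\|\bar V\|^2\le v_3^2/n$ handles the quadratic-remainder term. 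The tail $\P(\|\bar V\|>\ep)$ is bounded by a Rosenthal-type inequality (exactly as invoked in \eqref{eq:P(G and B_1)}) using $v_3<\infty$, giving $O(1/n^{3/2})$, negligible next to $1/\sqrt n$.

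For the nonuniform bound \eqref{eq:f(S).iid.power} I would reuse the sandwich with a $z$-dependent threshold $\eta\asymp z\tsi/\sqrt n$, so that the smoothing increment $\Phi(z+\eta\sqrt n/\tsi)-\Phi(z)$ decays like $1/z^3$ thanks to the gaussian bound $\Phi'(z)\le e^{-z^2/2}/\sqrt{2\pi}$ combined with $z/\Phi'(z)\ll z^2$, combine with the nonuniform Berry--Esseen estimate for $L(\bar V)$, and upgrade the Markov/Rosenthal controls on $\|\bar V\|^2$ and $\|\bar V\|$ to nonuniform versions producing the requisite $1/z^3$ factor. The admissibility range \eqref{eq:z,iid}, $z\le\om\sqrt n$, is exactly what ensures that the resulting threshold $\eta$ stays $\le\ep$, so the linearization remains valid throughout.

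The main obstacle is the nonuniform control of the quadratic remainder: one must establish $\P(\|\bar V\|^2>\eta)\lesssim 1/(z^3\sqrt n)$ with $\eta\asymp z/\sqrt n$, i.e.\ a nonuniform Rosenthal--Fuk--Nagaev-type estimate of the form $\P(\|\bar V\|\ge t)\lesssim 1/(t^3 n^{3/2})$ in the relevant range of $t$, under the mere third-moment hypothesis $v_3<\infty$. Matching the $1/z^3$ decay precisely, rather than something weaker like $1/z^2$, and doing so uniformly over $z\in(0,\om\sqrt n]$, is where the bulk of the technical work lies; everything else is standard Berry--Esseen smoothing.
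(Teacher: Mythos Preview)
The paper does not prove this theorem; it is quoted verbatim from \cite{nonlinear-publ} (see the section heading and the citation attached to the theorem statement), so there is no in-paper proof to compare against.

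That said, your sketch has a genuine gap already at the uniform bound \eqref{eq:f(S).iid}: the fixed-threshold sandwich cannot reach $O(1/\sqrt n)$. The two competing errors are the smoothing increment $\eta\sqrt n/\tsi$ and the tail $\P\bigl(\tfrac{\Mf}{2}\|\bar V\|^2>\eta\bigr)$. With your choice $\eta\asymp 1/\sqrt n$ the first is $O(1)$, not $O(1/\sqrt n)$; with $\eta\asymp 1/n$ the first is $O(1/\sqrt n)$ but Markov on $\E\|\bar V\|^2\le v_3^2/n$ makes the second $O(1)$, and upgrading to the Rosenthal bound $\E\|\bar V\|^3=O(n^{-3/2})$ still leaves it $O(1)$. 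Optimizing $\eta$ gives at best $O(n^{-1/4})$ via second moments or $O(n^{-3/10})$ via third moments---precisely the barrier the introduction mentions when noting that the $d_{\bW}$ bounds of \cite{anast-rein_publ,anast-ley} yield only $O(n^{-1/4})$ in Kolmogorov distance.

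The missing idea is that one must exploit the \emph{joint} distribution of $L(\bar V)$ and $\|\bar V\|^2$ rather than decouple them through a deterministic $\eta$; crudely, $\|\bar V\|^2$ is not merely $O_P(1/n)$ but concentrates around $\E\|\bar V\|^2\asymp 1/n$, and the fluctuation (a degenerate second-order statistic) has to be handled on its own terms. The argument in \cite{nonlinear-publ} carries this out with substantially more machinery than a sandwich plus Markov. Your diagnosis that the nonuniform remainder control is ``where the bulk of the technical work lies'' is right in spirit, but the same obstruction already blocks \eqref{eq:f(S).iid}; the scheme as written would need a fundamentally sharper treatment of the quadratic remainder before either conclusion follows.
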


The restriction \eqref{eq:z,iid} cannot be relaxed in general; see 
\cite{nonlinear-publ}. %RMejsSupplement~\ref{z<sqrt.n} 
% \cite{supp}. %RM22 \cite{nuisance.aos}. %RM5

%Let us write $a\O b$ if $|a|\le\CC b$ for some $\CC$ as in Theorem~\ref{thm:iid}. 

To simplify the presentation, in what follows let $\CC$ stand for various finite positive expressions whose values do not depend on $n$ or $z$; that is, $\CC$ will denote various positive real constants -- with respect to $n$ and $z$. However, $\CC$ may depend on other attributes of the setting, including the model $(\P_\th)_{\th\in\Th}$ under consideration, the $\P_{\th_0}$-distribution of $X_1$, and the values of parameters freely chosen in a given range (such as $\om$ in \eqref{eq:z,iid} and $\vp$ in \eqref{eq:smooth}).   

\section{Making the bracketing work: Applying the general bounds of \texorpdfstring{\cite{nonlinear-publ}}{} }
\label{appl}

Now let $d=3$ and then let 
\begin{equation*}
	\D:=\{\x=(x_1,x_2,x_3)\in\XX=\R^3\colon x_2+\E  U_1>0,\ (x_2+\E U_1)^2>2|x_1|\,|x_3+\E R_1^*|\}. 
\end{equation*}
By \eqref{eq:Z,R} and conditions \eqref{fisher} and \eqref{M_3}, $\E U_1=I(\th_0)\in(0,\infty)$ and $\E R_1^*\in[0,\infty)$. 
So, for some real $\ep>0$, the set $\D$ contains the $\ep$-neighborhood of the origin $\0$ of $\R^3$. 

Define functions $f_\pm\colon\R^3\to\R$ by the formula 
\begin{equation}\label{eq:f_pm}
	 f_\pm(\x)=f_\pm(x_1,x_2,x_3)=\frac{2x_1}{x_2+\E  U_1+\sqrt{(x_2+\E U_1)^2\mp2|x_1|\,|x_3+\E R_1^*|}} 
\end{equation}
for $\x=(x_1,x_2,x_3)\in\D$, and let $f(\x):=0$ if $\x\in\R^3\setminus\D$. 
Clearly, $f_\pm(\0)=0$, 
\begin{equation}\label{eq:L=}
	L_\pm(\x):=f'_\pm(\0)(\x)=\frac{x_1}{\E U_1}=\frac{x_1}{I(\th_0)}  
\end{equation}
for $\x=(x_1,x_2,x_3)\in\R^3$, and, in accordance with \eqref{eq:M^*}, the smoothness condition \eqref{eq:smooth} holds for some $\ep$ and $M_\ep$ in $(0,\infty)$ -- because, as was noted above, $\E U_1=I(\th_0)\in(0,\infty)$ and $\E R_1^*\in[0,\infty)$, and hence the denominator of the ratio in \eqref{eq:f_pm} is bounded away from $0$ for $\x=(x_1,x_2,x_3)$ in a neighborhood of $\0$. 

Next, let 
\begin{equation}\label{eq:V_i=}
	V_i:=(Z_i,U_i-\E U_i,R_i^*-\E R_i^*)
\end{equation}
for $i=1,\dots,n$, with $Z_i,U_i,R_i^*$ as defined in \eqref{eq:Z,R}. 
Then, by \eqref{eq:tsi,v_p,vsi_p}, \eqref{eq:L=}, and condition~\eqref{fisher}, %IP
for $f=f_\pm$, 
\begin{equation}\label{eq:tsi=}
\tsi=\sqrt{\frac{\E Z_1^2}{I(\th_0)^2}}=\frac1{\sqrt{I(\th_0)}}>0	
\end{equation}
and $v_3^3=\E\|V\|^3<\infty$ by conditions \eqref{M_2} and \eqref{M_3}. 
So, all the conditions of Theorem~\ref{th:nonlin} are satisfied for $f=f_\pm$.  

Moreover, by \eqref{eq:T_pm}, \eqref{eq:f_pm}, and \eqref{eq:V_i=},  
\begin{equation*}
	T_\pm=f_\pm(\bar V)  
\end{equation*}
on the event $G\setminus B$. 
So, %for all real $z$, 
by the inclusion relation in \eqref{eq:hath-th0=} \big(which holds on the event $G\setminus B=(G^\cc\cup B)^\cc$, where ${}^\cc$ denotes the complement\big) and \eqref{eq:tsi=}, inequality~\eqref{eq:f(S).iid} in Theorem~\ref{th:nonlin} implies 
\begin{equation*}%\label{eq:upper}
\begin{aligned}
 \P\Big(\sqrt{nI(\th_0)}\,(\hat\th-\th_0)\le z\Big)
 &\le\P\Big(\sqrt{nI(\th_0)}\,f_-(\bar V)\le z\Big)+\P(G^\cc\cup B) \\ 
 &\le\Phi(z)+\frac{\CC}{\sqrt{n}}+\P(G^\cc\cup B)
\end{aligned} 
\end{equation*}
and, quite similarly, 
\begin{equation*}\label{eq:lower}
\begin{aligned}
 \P\Big(\sqrt{nI(\th_0)}\,(\hat\th-\th_0)\le z\Big)
 &\ge\P\Big(\sqrt{nI(\th_0)}\,f_+(\bar V)\le z\Big)-\P(G^\cc\cup B) \\ 
 &\ge\Phi(z)-\frac{\CC}{\sqrt{n}}-\P(G^\cc\cup B), 
\end{aligned}  
\end{equation*}
for all real $z$. 
Note that $\P(G^\cc\cup B)=\P(G^\cc)+\P(G\cap B)$. It follows now by \eqref{eq:G} and \eqref{eq:P(G and B)} that 
\begin{equation}\label{eq:ub}
	\Big|\P\Big(\sqrt{nI(\th_0)}\,(\hat\th-\th_0)\le z\Big)-\Phi(z)\Big|\le\frac{\CC}{\sqrt{n}}+\P(|\hat\th-\th_0|>\de)
\end{equation}
for all real $z$. 
Quite similarly, but using \eqref{eq:f(S).iid.power} instead of \eqref{eq:f(S).iid}, one has 
\begin{equation}\label{eq:nub}
	\Big|\P\Big(\sqrt{nI(\th_0)}\,(\hat\th-\th_0)\le z\Big)-\Phi(z)\Big|\le\frac{\CC}{z^3\,\sqrt{n}}+\P(|\hat\th-\th_0|>\de)
\end{equation}
for $z$ as in \eqref{eq:z,iid}. 

%\textcolor{red}{!!!!!!!!!!!! \cite{miao10} concentration }

Typically, given rather standard regularity conditions, the remainder term $\P(|\hat\th-\th_0|>\de)$ decreases exponentially fast in $n$ and thus is negligible as compared with the ``error'' term $\frac{\CC}{\sqrt{n}}$, and even with the ``error'' term $\frac{\CC}{z^3\,\sqrt{n}}$ -- under condition \eqref{eq:z,iid}. 
Some details on this can be found in the following section. 

%The uniform and nonuniform bounds in \eqref{eq:ub} and \eqref{eq:nub}, supplemented by the subsequent bounds on the remainder  $\P(|\hat\th-\th_0|>\de)$, constitute the main result of this paper. 

\section{Exponentially small bounds on the remainder term \texorpdfstring{$\P(|\hat\th-\th_0|>\de)$}{} }
\label{remainder}

\subsection{Bounding the remainder: Log-concave case}
\label{log-conc}
In this subsection, suppose that the log-likelihood $\ell_x(\th)$ is concave in $\th\in\Th$, for each $x\in\XXX$. By condition \eqref{fisher}, $\E \ell''_X(\th_0)\ne0$. Hence, $\P\big(p_{\th_0+h}(X)\ne p_{\th_0}(X)\big)
=\P\big(\ell_X(\th_0+h)\ne\ell_X(\th_0)\big)>0$ for some $h\in(0,\de)$. 
The concavity of $\ell_x(\th)$ in $\th$ implies that of $\ell_\X(\th)$. So, if $\hat\th>\th_0+\de$, then $\ell_\X(\th_0+h)\ge\ell_\X(\th_0)$. Therefore, 
\begin{multline*}
	\P(\hat\th>\th_0+\de)\le\P\big(\ell_\X(\th_0+h)\ge\ell_\X(\th_0)\big)
	=\P\Big(\prod_{i=1}^n\sqrt{\frac{p_{\th_0+h}(X_i)}{p_{\th_0}(X_i)}}\ge1\Big) \\ 
	\le\E\prod_{i=1}^n\sqrt{\frac{p_{\th_0+h}(X_i)}{p_{\th_0}(X_i)}}=\la_+^n, 
\end{multline*}
where 
\begin{equation*}
	\la_+:=\E\sqrt{\frac{p_{\th_0+h}(X)}{p_{\th_0}(X)}}<\sqrt{\E\frac{p_{\th_0+h}(X)}{p_{\th_0}(X)}}=
	\sqrt{\E_{\th_0}\frac{p_{\th_0+h}(X)}{p_{\th_0}(X)}}=1;
\end{equation*}
the inequality here is an instance of a strict version of the Cauchy--Schwarz inequality, which holds because, as was noted, $\P\big(p_{\th_0+h}(X)\ne p_{\th_0}(X)\big)>0$. 
Quite similarly, $\P(\hat\th<\th_0-\de)\le\la_-^n$ for some $\la_-\in[0,1)$, and so, 
\begin{equation}\label{eq:log-conc}
	\P(|\hat\th-\th_0|>\de)\le2\la^n
\end{equation}
for $\la:=\max(\la_+,\la_-)\in[0,1)$. 

In particular, the condition of the concavity of the log-likelihood $\ell_x(\th)=\ln p_\th(x)$ in $\th$ is fulfilled in the important case when the densities $p_\th$ form an exponential family with $\th$ as the natural parameter,  
so that  
\begin{equation*}%\label{eq:exp-fam-nat}
 p_\th(x)=e^{\th g(x)-\psi(\th)}
\end{equation*}
for some function $\psi\colon\Th\to\R$ and 
all $\th\in\Th$ and $x\in\XXX$. Here, $g\colon\XXX\to\R$ is a  measurable function. Then necessarily $\psi(\th)=\ln\int_\XXX e^{\th g(x)}\mu(dx)$, which is convex in $\th$ -- because any mixture of log-convex functions is log-convex, as is well known -- see e.g.\ \cite[page~66, Theorem~5.4C]{keilson79}. So, $\ell_x(\th)=\ln p_\th(x)=\th g(x)-\psi(\th)$ is indeed concave in $\th$. 
In the case of multivariate exponential families, an exponentially decreasing bound of a form more complicated than that of the bound in \eqref{eq:log-conc} was given in \cite{kour84}. 

\subsection{Bounding the remainder: General case}
\label{remainder,general}

Upper bounds on the large-deviation probability $\P(|\hat\th-\th_0|>\de)$ that are exponentially decreasing in $n$ without the assumption of the concavity of the log-likelihood function were presented e.g.\ in \cite{radav81,radav83,mogul88,borovkov_MS,miao10}. 
However, the parameter space $\Th$ was assumed in \cite{radav81,radav83,borovkov_MS} to be bounded, whereas in \cite{miao10} the distributions $\P_\th$ were assumed to be subgaussian (cf. Theorems~2.1, 2.2, and 3.3 in \cite{miao10}). 
Conditions in \cite{mogul88} appear to be difficult to verify, including the strict positivity of the infimum of the rate function, needed for an actual exponential decrease. 

Related is the work \cite{ibr-radav}, containing a result on so-called moderate deviation probabilities for MLEs, which decrease slower than exponentially but still faster than any powers. So, such a result would be enough for our conclusions in Theorem~\ref{th:main} in the next section (cf.\ Remark~\ref{rem:slower} there), if it were not assumed in \cite{ibr-radav} (as in \cite{radav81,radav83,borovkov_MS}) that $\Th$ is bounded. 

Here we modify the method of \cite{borovkov_MS} to get rid of the condition that $\Th$ is bounded. 
Consider the (squared) Hellinger distance 
\begin{equation}\label{eq:H:=}
H(\th,\th_0):=\int_\XXX\big(\sqrt{p_\th}-\sqrt{p_{\th_0}}\big)^2\dd\mu%=2-2J(\th,\th_0)   	
\end{equation}
between the probability measures $\P_\th$ and $\P_{\th_0}$. 
%, where
%\begin{equation}
%	J(\th,\th_0):=\int_\XXX\sqrt{p_\th}\sqrt{p_{\th_0}}\dd\mu
%\end{equation}
%is the Hellinger integral.
 
Assume now the following conditions: 
%\begin{enumerate}%[(A)]
%	\item \label{H} For some positive real constants $c_0,\ga$ and all $\th\in\Th$
%\begin{equation}\label{eq:H,mine}
%H(\th,\th_0)\ge2-\frac2{\big(1+c_0(\th-\th_0)^2\big)^{\ga/2}}.   	
%\end{equation} 
%	\item \label{I} The set $\Th$ is a (possibly infinite) interval, and the Fisher information $I(\th)$ is well defined and satisfies the condition 	
%\begin{equation}\label{eq:I<}
%I(\th)\le c_1+c_2|\th-\th_0|^\al 	
%\end{equation}
%for some positive real constants $c_1,c_2,\al$ and all $\th\in\Th$. \big(If a point $\th$ in $\Th$ is an endpoint of the interval $\Th$, then $I(\th)$ is naturally understood in terms of the corresponding one-sided derivative of $p_\th(x)$ in $\th$.\big)
%\end{enumerate}
%
%Note that condition \eqref{H} is equivalent to the conjunction $(\text{D}_0)\&(\text{D}_1)$ of the following two conditions: 
\begin{enumerate}%[(A.1)]
\item[(B)]
The set $\Th$ is a (possibly infinite) interval, and the Fisher information $I(\th)$ is well defined and satisfies the boundedness condition 	
\begin{equation}\label{eq:I<}
I(\th)\le c_1+c_2|\th-\th_0|^\al 	
\end{equation}
for some positive real constants $c_1,c_2,\al$ and all $\th\in\Th$. \big(If a point $\th$ in $\Th$ is an endpoint of the interval $\Th$, then $I(\th)$ is naturally understood in terms of the corresponding one-sided derivative of $p_\th(x)$ in $\th$.\big) 
	\item[$(\text{D}_0)$] 
	\label{H1} For each bounded neighborhood $U$ of $\th_0$, 
\begin{equation}\label{eq:H,bor}
	H(\th,\th_0)\OG(\th-\th_0)^2
\end{equation}
over  %some positive real constant $\tilde c_0$ and 
all $\th\in U$. 
	\item[$(\text{D}_1)$] \label{H2} For some real constant $\ga>0$ and some bounded neighborhood $V$ of $\th_0$, 
\begin{equation}\label{eq:large th}
	J(\th,\th_0):=1-\tfrac12\,H(\th,\th_0)=\int_\XXX\sqrt{p_\th}\sqrt{p_{\th_0}}\dd\mu\O|\th-\th_0|^{-\ga}
\end{equation}
over  %some positive real constant $\tilde c_0$ and 
all $\th\in\Th\setminus V$.  
\end{enumerate}
Here and in the sequel, for any two expressions $E_1>0$ and $E_2\ge0$ whose values depend on some variables, the relation $E_1\OG E_2$ and its equivalent $E_2\O E_1$ mean that $\sup(E_2/E_1)<\infty$, where the supremum is taken over the corresponding specified range of values of the variables. 
%
% for some constant $c\in(0,\infty)$ not depending on the variables and for all values of the variables in the corresponding specified range of values. 

Conditions $(\text{D}_0)$ and $(\text{D}_1)$ may be referred to as distinguishability conditions:  $(\text{D}_0)$ means that the probability measures $\P_\th$ and $\P_{\th_0}$ are not too close to each other for $\th$ in a punctured neighborhood of $\th_0$, whereas 
$(\text{D}_1)$ implies that 
%for large values of $|\th-\th_0|$ the (squared) Hellinger distance $H(\th,\th_0)$ is close to its maximum value, $2$ -- that is, 
for $\th$ far away from $\th_0$, the probability measures $\P_\th$ and $\P_{\th_0}$ are almost mutually singular, and thus, easily distinguishable, at least in principle. 
 
\begin{remark}\label{rem:compact}
In the particular case when the parameter space $\Th$ is compact (or just bounded), condition $(\text{D}_1)$ %is 
%equivalent to %\eqref{eq:H,bor} 
%condition $(\text{D}_0)$ alone. 
trivially holds. 
Moreover, as shown in \cite[Section~31]{borovkov_MS}, if 
$\Th$ is compact and %IP at that 
the Fisher information $I(\th)$ is continuous in $\th\in\Th$ and strictly positive for $\th\in\Th$, then \eqref{eq:H,bor} holds over all $\th\in\Th$. 
%, and thus condition $(\text{D}_0)$ is fulfilled. 
So, %in the ``compact'' case, 
condition $(\text{D}_0)$ holds (whether the set $\Th$ is bounded or not) whenever the Fisher information $I(\cdot)$ is continuous and strictly positive on $\Th$. % some neighborhood of $\th_0$.
\end{remark} 

%!!! stopped here

However, since $H(\th,\th_0)$ is always bounded from above by $2$, it is clear that condition \eqref{eq:H,bor} cannot possibly hold over all $\th\in\Th$ if the parameter space $\Th$ is unbounded. In such a case, we need to complement  condition $(\text{D}_0)$ by condition $(\text{D}_1)$,  
%%As for condition \eqref{H}, 
%%%for the values of $\th$ outside of a large enough compact subset of $\Th$, 
%%it implies that for large values of $\th$ the the (squared) Hellinger distance $H(\th,\th_0)$ is close to its maximum value, $2$ -- that is, for $\th$ far away from $\th_0$, the probability measures $\P_\th$ and $\P_{\th_0}$ are almost mutually singular -- that is, easily distinguishable. 
%
%Thus, condition \eqref{H} 
which latter appears to be natural, and it is indeed commonly satisfied. In particular, %the conjunction of 
conditions $(\text{D}_0)$ and $(\text{D}_1)$ \big(as well as regularity conditions \eqref{diff}--\eqref{M_3}\big) -- %or, equivalently, \eqref{H} 
hold if $p_\th$ is the density belonging to any one of the following families of probability distributions: %BE-for-MLE\Untitled-23.nb
\begin{enumerate}[(a)]
	\item \label{normal} $\mathrm{N}(\th,\si^2)$ -- with $\si>0$ known, $\Th=\R$, $H(\th,\th_0)=2-2\exp\big\{-\frac{(\th-\th_0)^2}{8\si^2}\big\}$;  
	\item $\mathrm{N}(\mu,\th^2)$ -- with $\mu>0$ known, $\Th=(0,\infty)$, $H(\th,\th_0)=2-2\sqrt{\frac{2\th\th_0}{\th^2+\th_0^2}}$;  
	\item $\mathrm{Exp}(\th)$ -- with $\Th=(0,\infty)$, $H(\th,\th_0)=2-\frac{4\sqrt{\th\th_0}}{\th+\th_0}$;  
	\item more generally, Weibull distributions $\mathrm{W}(k,\th)$ -- with  \\ 
	$p_\th(x)\equiv  
	\frac k\th(\frac x\th)^{k-1}e^{-(x/\th)^k}I\{x>0\}$, $k>0$ known, $\Th=(0,\infty)$, $H(\th,\th_0)=2-\frac{4(\th\th_0)^{k/2}}{\th^k+\th_0^k}$;  
	\item $\mathrm{Gamma}(\th,\be)$ -- with scale parameter $\be>0$ known, $\Th=(0,\infty)$, $H(\th,\th_0)=2-\frac{2\Ga((\th+\th_0)/2)}{\sqrt{\Ga(\th)\Ga(\th_0)}}$;  
	\item $\mathrm{Gamma}(\al,\th)$ -- with shape parameter $\al>0$ known, $\Th=(0,\infty)$, $H(\th,\th_0)=2-\frac{2^{1+\al}(\th\th_0)^{\al/2}}{(\th+\th_0)^\al}$;  
	\item $\mathrm{Poisson}(\th)$ -- with $\Th=(0,\infty)$, $H(\th,\th_0)=2-2e^{-(\sqrt\th-\sqrt\th_0)^2/2}$;  
	\item $\mathrm{Beta}(s\th,s(1-\th))$ -- with $s>0$ known, $\Th=(0,1)$, $H(\th,\th_0)
	= \\ 
	2-\frac{2 \BB\left(\frac{1}{2} s \left(\theta +\theta _0\right),\frac{1}{2} s
   \left(2-\theta -\theta _0\right)\right)}{\sqrt{\BB(s \theta ,s-s \theta )
   \BB\left(s \theta _0,s-s \theta _0\right)}}$, where $\BB(\cdot,\cdot)$ is the Beta function;  
	\item $\mathrm{Beta}(\al\th,\be\th)$ -- with $\al,\be>0$ known, $\Th=(0,\infty)$, $H(\th,\th_0)= \\ 
	2-\frac{2 \BB\left(\frac{1}{2} \alpha  \left(\theta +\theta _0\right),\frac{1}{2}
   \beta  \left(\theta +\theta _0\right)\right)}{\sqrt{\BB(\alpha  \theta ,\beta 
   \theta ) \BB\left(\alpha  \theta _0,\beta  \theta _0\right)}}$ \big(in this case, by Stirling's formula, %IP conditions $(\text{D}_0)$ and $(\text{D}_1)$ %\eqref{H} 
condition $(\text{D}_1)$ holds 
with $\ga=1/4$% and $c_0$ depending in a somewhat complicated manner on $\al,\be,\th_0$
\big). 
\end{enumerate}
\qed

\medskip

Item \eqref{normal} above, concerning the normal location family, can be quite broadly generalized: 

\begin{proposition}\label{prop:shift}
Suppose that $(p_\th)_{\th\in\Th}$ is a location family over $\R$, so that $p_\th(x)=p(x-\th)$ for all $x\in\R$ and $\th\in\Th$, where $p$ is a pdf (with respect to the Lebesgue measure over $\R$). Suppose also that 
\begin{equation}\label{eq:p<}
	p(u)\O(1+|u|)^{-\al}
\end{equation}
for some real $\al>1$ and all real $u$. 
Then condition \emph{$(\text{D}_1)$} holds. 
\end{proposition}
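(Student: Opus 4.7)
The plan is to show that $J(\th,\th_0)$ decays polynomially in $|\th-\th_0|$ by splitting the real line into regions where either (i) one of the densities $p(u-t)$ and $p(u)$ is uniformly small by virtue of the tail estimate \eqref{eq:p<}, or (ii) the region has bounded length, so that Cauchy--Schwarz converts $\int\sqrt{p}$ into a factor of that length to the power $1/2$. Without loss of generality take $t:=\th-\th_0>0$; after substituting $u:=x-\th_0$ in \eqref{eq:H:=} one has
$$J(\th,\th_0)=\int_\R \sqrt{p(u-t)\,p(u)}\,du.$$
I would partition $\R$ into $(-\infty,0)$, $[0,t]$, and $(t,\infty)$, and further subdivide the middle piece at $u=t/2$.

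On the outer region $(-\infty,0)$, Cauchy--Schwarz gives
$$\int_{-\infty}^0\sqrt{p(u-t)\,p(u)}\,du\le\Bigl(\int_{-\infty}^0 p(u-t)\,du\Bigr)^{\!1/2}\Bigl(\int_{-\infty}^0 p(u)\,du\Bigr)^{\!1/2}.$$
The first factor equals $\int_{-\infty}^{-t}p(v)\,dv$, which by \eqref{eq:p<} is $\O(1+t)^{1-\al}$, while the second is at most $1$. Hence this piece is $\O(1+t)^{-(\al-1)/2}$. The region $(t,\infty)$ admits an identical bound by symmetry.

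For the middle region, on $[0,t/2]$ the shift satisfies $u-t\le-t/2$, so \eqref{eq:p<} gives $\sqrt{p(u-t)}\O(1+t)^{-\al/2}$ uniformly in $u$. Pulling this out and applying Cauchy--Schwarz,
$$\int_0^{t/2}\sqrt{p(u)}\,du\le(t/2)^{1/2}\Bigl(\int_0^{t/2}p(u)\,du\Bigr)^{\!1/2}\le(t/2)^{1/2},$$
which yields $\int_0^{t/2}\sqrt{p(u-t)\,p(u)}\,du\O(1+t)^{-\al/2}\,t^{1/2}\O(1+t)^{-(\al-1)/2}$, using $t\le 1+t$ in the last step. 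The subregion $[t/2,t]$ is handled symmetrically by interchanging the roles of $p(u)$ and $p(u-t)$. Summing the three regions gives $J(\th,\th_0)\O(1+t)^{-(\al-1)/2}$, establishing condition $(\text{D}_1)$ with $\ga=(\al-1)/2>0$ (which is positive precisely because $\al>1$).

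The only subtle point is that $\sqrt{p}$ need not be integrable on $\R$ when $\al\le 2$, which rules out the naive global use of Cauchy--Schwarz applied to $J$ as a whole; this is precisely why I localize $\sqrt{p}$ to subintervals of length $\O t$, on which the trivial bound $\int p\le 1$ combined with Cauchy--Schwarz yields a factor of only $t^{1/2}$. The implicit constants propagate uniformly in $t$ because \eqref{eq:p<} supplies a single $C$ with $p(v)\le C(1+|v|)^{-\al}$ for all $v\in\R$, and the remaining manipulations are elementary.
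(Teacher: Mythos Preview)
Your argument is correct. Both your proof and the paper's rely on splitting the integral into an ``outer'' region (where one of $|u|$, $|u-t|$ is large) and an ``inner'' region of length $\O t$, and then exploiting \eqref{eq:p<}; the executions differ in how each piece is estimated. The paper centers symmetrically (taking $\th_0=-\th$) and splits at $|x|=2\th$; on the outer piece it bounds \emph{both} square-root factors pointwise via \eqref{eq:p<}, obtaining $\int_{|x|\ge 2\th}|x|^{-\al}\,dx\O\th^{1-\al}$, and on the inner piece it integrates $(1+|u|)^{-\al/2}$ directly rather than invoking Cauchy--Schwarz. This yields the sharper admissible exponent $\ga_\al=\tfrac{\al}{2}\wedge(\al-1)$, whereas your Cauchy--Schwarz treatment gives $\ga=(\al-1)/2$ uniformly: on the outer region you lose half the exponent because $\bigl(\int_{-\infty}^{-t}p\bigr)^{1/2}\O t^{(1-\al)/2}$ replaces $t^{1-\al}$, and on the inner region for $\al>2$ you miss that $\int\sqrt{p}<\infty$, so the $t^{1/2}$ factor is unnecessary there. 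Your version has the compensating virtue of being more uniform---the same device on every region, with the same resulting exponent---and avoids the case distinction and the stray $\ln\th$ at $\al=2$. Since the proposition only asks for \emph{some} $\ga>0$, either route suffices.
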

Note that the restriction $\al>1$, together with \eqref{eq:p<}, implies the integrability of the nonnegative function $p$. 

\begin{proof}[Proof of Proposition~\ref{prop:shift}]
Without loss of generality, $\th_0=-\th$ and $\th>0$, so that $\th-\th_0=2\th>0$ and  
\begin{equation}%\label{eq:large th}
	J(\th,\th_0)=%\int_\XXX\sqrt{p_\th}\sqrt{p_{\th_0}}\dd\mu=
	\int_\R\sqrt{p(x+\th)}\sqrt{p(x-\th)}\dd x=\int_{|x|\ge2\th}\cdots\;+\int_{|x|<2\th}\cdots. 
\end{equation}
Since $|x|\ge2\th$ implies $|x\pm\th|\OG|x|$, condition \eqref{eq:p<} yields 
\begin{equation}
	\int_{|x|\ge2\th}\cdots\O\int_{|x|\ge2\th}|x|^{-\al}\dd x\O\th^{1-\al}. 
\end{equation}
Since $0\le x<2\th$ implies $x+\th\ge\th$ and $-\th\le x-\th<\th$, condition \eqref{eq:p<} yields 
\begin{equation}
	\int_0^{2\th}\ldots\O\th^{-\al/2}\int_{-\th}^\th(1+|u|)^{-\al/2}\dd u\O\th^{-\al/2}\,\th^{0\vee(1-\al/2)} \ln\th 
\end{equation}
for (say) $\th\ge2$; the factor $\ln\th$ is actually needed here only in the case when $\al=2$. 
The integral $\int_{-2\th}^0\cdots$ can be bounded quite similarly. So, 
\begin{equation}
	\int_{|x|<2\th}\ldots\O\th^{-\al/2}\,\th^{0\vee(1-\al/2)} \ln\th
\end{equation}
for $\th\ge2$. 
Thus, $(\text{D}_1)$ holds for any $\ga\in(0,\ga_\al)$, where $\ga_\al:=\frac\al2-\big(0\vee(1-\frac\al2)\big)=\frac\al2\wedge(\al-1)>0$. 
\end{proof}

The problem concerning the possibility of a non-compact parameter space $\Th$ may be illustrated by the following simple example:

\begin{example}\label{ex:non-compact}
For $\th\in\Th=(-1,\infty)$, let 
$p_\th$ be the density (with respect to the Lebesgue measure on $\R$) of the normal distribution with mean $\mu(\th):=\frac\th{1+\th^2}$ and variance $\si^2(\th):=\frac{(1+\th)^3-\th}{1+\th^3}$, and let $\th_0=0$, so that $\th_0\in\Th^\circ=\Th$. Then %it is not hard to see that 
for any two distinct $\th$ and $\tau$ in $\Th$ the equality $\mu(\tau)=\mu(\th)$ implies $\th\notin\{0,1\}$ and $\tau=1/\th>0$, whence $\si^2(\tau)\ne\si^2(\th)$. So,  
$p_\tau\ne p_\th$ for any two distinct $\th$ and $\tau$ in $\Th$. However, $\mu(\th)\underset{\th\to\infty}\longrightarrow0=\mu(0)$ and $\si^2(\th)\underset{\th\to\infty}\longrightarrow1=\si^2(0)$, so that $p_0$ is almost indistinguishable from $p_\th$ for large $\th$. More specifically, it is not hard to check that here 
%BE-for-MLE\EJS\r evision\Untitled-12.nb 
\begin{equation*}%\label{eq:large th}
	J(\th,\th_0)=\int_\R\sqrt{p_\th(x)}\sqrt{p_0(x)}\dd x
	=\sqrt{\frac{2\si(\th)}{\si^2(\th)+1}} \exp\Big(-\frac{\mu(\th) ^2}{4 \si^2(\th)+4}\Big)  
	\underset{\th\to\infty}\longrightarrow1, 
\end{equation*}
so that this situation is excluded by condition \eqref{eq:large th}. 
\end{example}

Now we are well prepared to state the main result of this subsection: 

\begin{proposition}\label{prop:la^n}
Under %conditions \eqref{H} and (B) \big(or, equivalently, under 
conditions \emph{(B)}, \emph{$(\text{D}_0)$}, and \emph{$(\text{D}_1)$}, 
\begin{equation}\label{eq:bor}
	\P(|\hat\th-\th_0|>\de)\le c\,\la^n
\end{equation}
for some real constants $c>0$ and $\la\in[0,1)$ (depending on $\ga,c_0,\al,c_1,c_2$) and all natural $n$; cf.\ \eqref{eq:log-conc}.
\end{proposition}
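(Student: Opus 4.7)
The plan is to adapt Borovkov's Hellinger-affinity method \cite{borovkov_MS} to the possibly unbounded $\Th$ by using the tail decay $(\text{D}_1)$ to control the contribution from large $|\th - \th_0|$, while $(\text{D}_0)$ handles the bounded portion of the bad set and (B) supplies the Lipschitz-type control needed to pass from pointwise to uniform bounds. The starting reduction is standard: since $\hat\th$ is a (generalized) maximizer of $L_\X$, the event $\{|\hat\th - \th_0| > \de\}$ forces $\sup_{\th \in \Th_\de} L_\X(\th) \ge L_\X(\th_0)$, where $\Th_\de := \Th \cap \{|\th - \th_0| > \de\}$, so
\begin{equation*}
	\P(|\hat\th - \th_0| > \de) \le \P\Big(\sup_{\th \in \Th_\de} \prod_{i=1}^n \sqrt{\tfrac{p_\th(X_i)}{p_{\th_0}(X_i)}} \ge 1\Big).
\end{equation*}
For a single $\th$ the pointwise Markov bound $\P\bigl(\prod_i \sqrt{p_\th(X_i)/p_{\th_0}(X_i)} \ge 1\bigr) \le J(\th,\th_0)^n$ is available, and the task is to uniformize it.

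I would split $\Th_\de = \Th_{\mathrm{near}} \cup \Th_{\mathrm{far}}$ with $\Th_{\mathrm{near}}$ the part inside some bounded neighborhood $U \supseteq V$ (the $V$ of $(\text{D}_1)$) and $\Th_{\mathrm{far}} := \Th \setminus U$. On $\Th_{\mathrm{near}}$, $(\text{D}_0)$ gives a uniform gap $J(\th,\th_0) \le 1 - c_0 \de^2 < 1$, and the identity $\|\partial_\th \sqrt{p_\th}\|_{L^2(\mu)}^2 = I(\th)/4$ combined with (B) yields the Lipschitz-type bound
\begin{equation*}
H(\th_1,\th_2)^{1/2} \le \tfrac12\int_{\th_1}^{\th_2}\sqrt{I(t)}\,dt.
\end{equation*}
This lets me fix a finite $\eta$-net $\{\th_j\}$ of $\Th_{\mathrm{near}}$ on whose balls the oscillation of $\prod_i\sqrt{p_\th(X_i)/p_{\th_0}(X_i)}$ is negligible (in $L^2(\P_{\th_0}^{\otimes n})$); a union bound then gives a $\Th_{\mathrm{near}}$-contribution of order $(1-c_0\de^2)^n$ times the finite net size.

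For $\Th_{\mathrm{far}}$ I would use a dyadic decomposition $\Th_{\mathrm{far}} = \bigcup_{k \ge 0} S_k$ with $S_k := \{\th : 2^k r_0 \le |\th - \th_0| < 2^{k+1} r_0\}$ for a suitable $r_0$. On $S_k$, $(\text{D}_1)$ gives $J(\th,\th_0) \le C(2^k r_0)^{-\ga}$ and (B) gives $I(\th) \le c_1 + c_2(2^{k+1} r_0)^\al$, so an $\eta_k$-net of $S_k$ with $\eta_k := \ka (2^k r_0)^{-\al/2}$ has cardinality $O((2^k r_0)^{1+\al/2})$ and, by the same Hellinger--Lipschitz inequality, controls in-ball oscillation by a small absolute constant. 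The union bound over the whole family yields a contribution of the form
\begin{equation*}
	\sum_{k\ge 0} O\bigl((2^k r_0)^{1+\al/2}\bigr)\cdot C^n (2^k r_0)^{-n\ga},
\end{equation*}
which is geometrically small in $n$ as soon as $n\ga > 1 + \al/2$, and hence, after inflating the constants, for all $n \ge 1$. Combining with the near-region estimate delivers \eqref{eq:bor}.

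The main obstacle is precisely the far-region analysis: one must calibrate the net spacing $\eta_k$ to the Fisher-information growth permitted by (B) so that in-ball oscillation of the likelihood-ratio process does not wipe out the pointwise Hellinger decay, and then verify that the dyadic sum of (net size) $\times$ (pointwise $J^n$) is both summable and exponentially small in $n$. This is exactly where the polynomial tail decay in $(\text{D}_1)$ is required to beat the polynomial growth of covering numbers coming from (B); compactness of $\Th$, assumed in \cite{radav81,radav83,borovkov_MS}, would trivialize this step but is not available here.
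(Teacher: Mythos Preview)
Your reduction and the near/far split are sound, but the ``in-ball oscillation'' step has a genuine gap, and the paper's route is different precisely at that point.

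You work with $Y(\th)=\prod_i\sqrt{p_\th(X_i)/p_{\th_0}(X_i)}=Z(\th)^{1/2}$ and claim the oscillation over a mesh ball is ``negligible in $L^2$'' or ``controlled by a small absolute constant'' via the Hellinger--Lipschitz inequality. But $\|Y(\th_1)-Y(\th_2)\|_2^2=2\bigl(1-J(\th_1,\th_2)^n\bigr)$, which is \emph{not} small at any $n$-independent mesh: with $\eta_k=\ka(2^kr_0)^{-\al/2}$ you get $H(\th_1,\th_2)\asymp\ka^2$, hence $J^n\approx e^{-c\ka^2 n}\to0$ and the $L^2$ oscillation tends to $\sqrt2$. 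If instead you try a pathwise derivative bound, $Y'=\tfrac12 Z^{1/2}\ell'_\X$, Cauchy--Schwarz only yields $\E|Y'(u)|\le\tfrac12\sqrt{nI(\th_0+u)}$ with no Hellinger-affinity factor, so the integrated oscillation does not decay in $n$ and swamps the pointwise $J^n$ bound at the net points. In short, with the exponent $1/2$ there is no way to extract both the change of measure (giving $nI$) and the affinity decay (giving $J^{n/2}$) simultaneously.

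The paper (following Borovkov) fixes this by passing to $P(u):=Z(u)^{3/4}$ and using the one-dimensional embedding $\sup_{u>\de}P(u)\le P(\de)+\int_\de^\infty|P'(t)|\,dt$, so that $\E\sup_{u>\de}P(u)\le\E P(\de)+\int_\de^\infty\E|P'(t)|\,dt$. The point of the exponent $3/4$ is that Cauchy--Schwarz on $P'=\tfrac34(Z^{1/2}\ell'_\X)\cdot Z^{1/4}$ gives
\[
\E|P'(u)|\le\tfrac34\sqrt{\E_{\th_0}\!\bigl[Z(u)\ell'_\X(\th_0+u)^2\bigr]}\,\sqrt{\E Z(u)^{1/2}}
=\tfrac34\sqrt{nI(\th_0+u)}\,\sqrt{\E Z(u)^{1/2}},
\]
so the derivative bound carries the factor $J(\th_0,\th_0+u)^{n/2}$. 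One then integrates $\sqrt{nI(\th_0+u)}\,J^{n/2}$ over $(\de,u_*]$ (using $(\text{D}_0)$) and over $(u_*,\infty)$ (using $(\text{D}_1)$ and (B)), and both pieces are exponentially small. No nets, no union bound, no dyadic shells are needed: the continuous integral does all the uniformization. If you replace your $Y$ by $P=Z^{3/4}$ and your discrete nets by this integral, you recover exactly the paper's argument.
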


Inequality \eqref{eq:bor} is similar to inequality (6) in \cite[Section~33.2, Theorem~3]{borovkov_MS}, with the following main differences. 

\begin{enumerate}[(i)] 
	\item It is assumed in \cite{borovkov_MS} that $\Th$ is compact, in addition to the assumption that $I(\th)$ is continuous in $\th\in\Th$ and strictly positive for $\th\in\Th$. Under these assumptions, condition $(\text{D}_0)$ is, not assumed, but derived in \cite{borovkov_MS}. As noted above, if the parameter space $\Th$ is compact, then condition %\eqref{H} is equivalent to $(\text{D}_0)$
$(\text{D}_1)$ is trivial. 
	\item As we do not assume that $\Th$ is compact (or even bounded), we need to control the behavior of log-likelihood $\ell_\X(\th)$ for $\th$ far from $\th_0$. This is done using condition $(\text{D}_1)$. 
	\item In \cite{borovkov_MS}, instead of condition (B) above, it is assumed that the Fisher information $I(\th)$ is just bounded over all $\th\in\Th$. However, mainly following the lines of proof in \cite{borovkov_MS}, one can see that the more general condition (B) suffices, given conditions %\eqref{H}
$(\text{D}_0)$ and $(\text{D}_1)$. 
\end{enumerate}

For the readers' convenience here is 

%\newpage

\begin{proof}[Proof of Proposition~\ref{prop:la^n}]
Let 
\begin{equation}
	Z(u):=\frac{p_{\th_0+u}(\X)}{p_{\th_0}(\X)}=\prod_{i=1}^n \frac{p_{\th_0+u}(X_i)}{p_{\th_0}(X_i)}=\exp\{\ell_\X(\th_0+u)-\ell_\X(\th_0)\}, 
\end{equation}
where $p_\th(\X):=\prod_{i=1}^n p_\th(X_i)=\exp\ell_\X(\th)$ and 
$\ell_\X$ is the log-likelihood function, as defined in \eqref{eq:ell};  
here and subsequently in this proof, $u$ is a real number such that $\th_0+u\in\Th$.  

%By the definitions of the functions $H$ and $J$ in \eqref{eq:H:=} and \eqref{eq:large th}, 
By conditions $(\text{D}_1)$ and $(\text{D}_0)$, there exist real $C_1>0$, 
\begin{equation}\label{eq:u_*>}
u_*>C_1^{1/\ga}\vee\de, 	
\end{equation}
and $C_0>0$ such that 
\begin{equation}\label{eq:Z^{1/2},1}
	\E Z(u)^{1/2}=\E_{\th_0} Z(u)^{1/2}=J(\th_0,\th_0+u)^n\le C_1^n u^{-n\ga}\quad\text{if }|u|>u_*
\end{equation}
and 
\begin{equation}\label{eq:Z^{1/2},0}
	\E Z(u)^{1/2}=\big(1-\tfrac12\,H(\th_0,\th_0+u)\big)^n
	\le (1-u^2/C_0)^n \le e^{-nu^2/C_0} \quad\text{if }|u|\le u_*. 
\end{equation} 
%One may note here that the inequalities in \eqref{eq:Z^{1/2},1} and \eqref{eq:Z^{1/2},0} imply that 
Note also that $\E Z(u)=1$. So, introducing 
\begin{equation}
	P(u):=Z(u)^{3/4}, 
\end{equation}
by the Cauchy--Schwarz inequality one has 
\begin{equation}\label{eq:EP}
	\E P(u)\le\sqrt{\E Z(u)\,\E Z(u)^{1/2}}=\sqrt{\E Z(u)^{1/2}}. 
%	\le\lef\{
%	\begin{aligned}
%	e^{-nu^2/C_0}
%	\end{aligned}
%	\right.
\end{equation}
Further, $P'(u)=\frac34\,\ell'_\X(\th_0+u)Z(u)^{3/4}$, whence, again by the Cauchy--Schwarz inequality,
\begin{equation}\label{eq:EP'} 
\begin{aligned}
	\E|P'(u)|&\le\tfrac34\,\sqrt{\E\ell'_\X(\th_0+u)^2 Z(u)\,\E Z(u)^{1/2}} \\ 
	&=\tfrac34\,\sqrt{\E_{\th_0+u}\ell'_\X(\th_0+u)^2 \,\E Z(u)^{1/2}} \\ 
	&=\tfrac34\,\sqrt{nI(\th_0+u) \,\E Z(u)^{1/2}}. 
\end{aligned}
\end{equation}
For $u>\de$, one has $P(u)\le P(\de)+\int_{\Th\cap(\de,\infty)}|P'(t)|\,dt$. So, by \eqref{eq:EP}, \eqref{eq:Z^{1/2},0}, \eqref{eq:EP'}, \eqref{eq:I<}, \eqref{eq:u_*>}, and \eqref{eq:Z^{1/2},1},   
\begin{equation*}
	%P^+(\de):=
	\E\sup_{u>\de}P(u)\le e^{-n\de^2/(2C_0)}+I_0+I_1=\la_*^n+I_0+I_1, 
\end{equation*}
where $\la_*:=e^{-\de^2/(2C_0)}\in(0,1)$, 
\begin{equation*}
	I_0:=\int_\de^{u_*}\sqrt{n(c_1+c_2u^\al)}\,e^{-nu^2/(2C_0)}\,du
	\O\int_\de^\infty \sqrt{nu^\al}\,e^{-nu^2/(2C_0)}\,du \O 
%	e^{-n\de^2/(2C_0)} 
%	=
	\la_0^n      
\end{equation*}
for any fixed $\la_0\in(\la_*,1)$, 
and 
\begin{equation*}
	I_1:=\int_{u_*}^\infty \sqrt{n(c_1+c_2u^\al)C_1^n u^{-n\ga}}\,du\O\la_1^{n/2}  
\end{equation*}
for any fixed $\la_1\in(C_1/u_*^\ga,1)$ -- note that the latter interval is nonempty, in view of \eqref{eq:u_*>}.  
Thus, $\E\sup_{u>\de}P(u)\O\la^n$ for $\la:=\la_0\vee\sqrt{\la_1}\in(0,1)$. Quite similarly, $\E\sup_{u<-\de}P(u)\O\la^n$ and hence $\E\sup_{|u|>\de}P(u)\O\la^n$. 
So, 
\begin{equation*}
	\P(|\hat\th-\th_0|>\de)\le\P(\sup_{|u|>\de}Z(u)\ge Z(0))=\P(\sup_{|u|>\de}P(u)\ge1)\le\E\sup_{|u|>\de}P(u)\O\la^n, 
\end{equation*}
which completes the proof of Proposition~\ref{prop:la^n}. 
\end{proof}

\section{%Additional comments
Conclusion}\label{concl}
%The previous considerations yield 

Inequalities \eqref{eq:ub} and \eqref{eq:nub} together with \eqref{eq:log-conc} and Proposition~\ref{prop:la^n} yield 

\begin{theorem}\label{th:main}
Suppose that conditions \eqref{fisher}, \eqref{M_2}, and \eqref{M_3} hold. Suppose also that either (i) the log-likelihood $\ell_x(\th)$ is concave in $\th\in\Th$, for each $x\in\XXX$, or (ii) conditions 
%\eqref{H} and (B) \big(or, equivalently, conditions 
\emph{(B)}, \emph{$(\text{D}_0)$}, and \emph{$(\text{D}_1)$} %\big) 
hold. Then 
\begin{equation}\label{eq:ub,mle}
	\Big|\P\Big(\sqrt{nI(\th_0)}\,(\hat\th-\th_0)\le z\Big)-\Phi(z)\Big|\le\frac{\CC}{\sqrt{n}}
\end{equation}
for all real $z$, and  
\begin{equation}\label{eq:nub,mle}
	\Big|\P\Big(\sqrt{nI(\th_0)}\,(\hat\th-\th_0)\le z\Big)-\Phi(z)\Big|\le\frac{\CC}{z^3\,\sqrt{n}} 
\end{equation}
for $z$ as in \eqref{eq:z,iid}. Here, as before, each of the two instances of the symbol $\CC$ stands for a finite positive expression whose values do not depend on $n$ or $z$, in accordance with the last paragraph of Section~\ref{f(bar V)}.  
\end{theorem}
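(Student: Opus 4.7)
The proof is essentially a mechanical assembly, so the plan has only a few moving parts. All the substantive work has already been carried out in the preceding sections: inequality~\eqref{eq:ub} gives the uniform bound up to the remainder term $\P(|\hat\th-\th_0|>\de)$, inequality~\eqref{eq:nub} gives the nonuniform bound up to the same remainder, and \eqref{eq:log-conc} together with Proposition~\ref{prop:la^n} controls that remainder in either of the two scenarios (i) or (ii) of the hypothesis.

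First I would invoke either \eqref{eq:log-conc} (in case (i), where log-concavity of $\ell_x(\th)$ in $\th$ is assumed) or Proposition~\ref{prop:la^n} (in case (ii), where the structural conditions (B), $(\text{D}_0)$, $(\text{D}_1)$ are assumed) to obtain a bound of the form
\begin{equation*}
\P(|\hat\th-\th_0|>\de)\le c\,\la^n
\end{equation*}
for some $c>0$ and $\la\in[0,1)$ that do not depend on $n$. The key qualitative point is that this remainder decays exponentially in $n$, hence faster than any negative power of $n$.

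For the uniform bound~\eqref{eq:ub,mle}, I would substitute this exponential bound into \eqref{eq:ub} and observe that $c\la^n\le \CC'/\sqrt n$ for some finite constant $\CC'$ and all $n\in\N$, so the exponential remainder is absorbed into a (larger) constant times $1/\sqrt n$. For the nonuniform bound~\eqref{eq:nub,mle}, I would use \eqref{eq:nub} and then exploit the range restriction \eqref{eq:z,iid}, $z\in(0,\om\sqrt n\,]$, which gives $1/(z^3\sqrt n)\ge 1/(\om^3 n^2)$, so that $c\la^n\le \CC''/(z^3\sqrt n)$ uniformly in admissible $z$ and in $n$ (adjusting the constant to cover finitely many small $n$ if needed). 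The exponential decay beats every polynomial, so no sharper analysis is required.

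There is no real obstacle to overcome at this stage; the only delicate point is simply the verification that on the admissible large-deviation range~\eqref{eq:z,iid}, an exponentially small quantity does not dominate the nonuniform main term $\CC/(z^3\sqrt n)$, which is automatic from the estimate $z^3\sqrt n\le\om^3 n^2$. With both remainder terms absorbed, inequalities~\eqref{eq:ub,mle} and~\eqref{eq:nub,mle} follow directly from~\eqref{eq:ub} and~\eqref{eq:nub}, respectively, completing the proof.
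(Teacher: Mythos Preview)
Your proposal is correct and follows exactly the paper's approach: the paper's proof is nothing more than the single sentence stating that inequalities~\eqref{eq:ub} and~\eqref{eq:nub} together with~\eqref{eq:log-conc} and Proposition~\ref{prop:la^n} yield the theorem. Your write-up in fact supplies more detail than the paper does, in particular the explicit verification that on the range~\eqref{eq:z,iid} the exponential remainder $c\la^n$ is dominated by $\CC/(z^3\sqrt n)$ via $z^3\sqrt n\le\om^3 n^2$.
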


\begin{remark}\label{rem:slower}
It should be clear that the conditions assumed in the second sentence of Theorem~\ref{th:main} can be replaced by any other conditions that imply \eqref{eq:bor} for some real constants $c>0$ and $\la\in[0,1)$ not depending on $n$. 
Actually, a much weaker bound, of the form $c/n^2$, instead of the exponentially fast decreasing upper bound $c\la^n$ in \eqref{eq:bor}, will already suffice. 
%\\
%\textcolor{red}{???? Bahadur? slope?} 
\end{remark}

Theorem~\ref{th:main} can be extended to the more general case of $M$-estimators. %IP or, even more generally, to the estimators that are zeros of estimating functions; see e.g.\ \cite{heyde97} for the definition of such functions. 
Indeed, the condition that $p_\th$ is a pdf for $\th\ne\th_0$ is used in our proofs only in order to state that $\E_\th\ell'_X(\th)=0$ and $\E_\th \ell'_X(\th)^2=-\E_\th \ell''_X(\th)=I(\th)\in(0,\infty)$. In the case of $M$-estimators%IP or zeros of estimating functions
, the corresponding conditions will have to be just assumed, with some other expressions in place of the Fisher information $I(\th)$, as it is done e.g.\ in \cite{pfanzagl71,pfanzagl73}, where 
uniform %(but not nonuniform) 
bounds of optimal order $O(1/\sqrt n)$ for $M$-estimators were obtained;    
%(via different, specialized methods): in \cite{pfanzagl71} for a one-dimensional parameter space $\Th$ and in \cite{pfanzagl73} in the multidimensional case; 
$M$-estimators were referred to as minimum contrast estimates in \cite{michel-pfanzagl71,pfanzagl71,pfanzagl73}. We have chosen to restrict the consideration here to MLEs in order not to obscure the novelty elements in our result. 

The most significant novelty in our Theorem~\ref{th:main}, as compared with the results of \cite{michel-pfanzagl71,pfanzagl71,pfanzagl73}, is that, in addition to the uniform bound in \eqref{eq:ub,mle}, inequality \eqref{eq:nub,mle} in Theorem~\ref{th:main} also provides a nonuniform Berry--Esseen-type bound for MLEs in general, which latter appears to be the first such result in the literature -- except for the already mentioned special case considered recently in \cite{nonlinear-publ}. On the other hand, paper \cite{pfanzagl73} treats the  case of a multidimensional parameter $\th$. The uniform bound in \cite{michel-pfanzagl71} was of the form $O(\sqrt{\ln n}/\sqrt n)$, rather than of the optimal order $O(1/\sqrt n)$. 

Another notable distinction is that 
condition \cite[(1)]{michel-pfanzagl71} (the same as the corresponding conditions on page~73 in \cite{pfanzagl71} and on page~173 in \cite{pfanzagl73}) effectively reduces the consideration to the case when the parameter space $\Th$ is compact in $[-\infty,\infty]$. This obviates the need in a condition such as %\eqref{eq:large th}
$(\text{D}_1)$, which is there to control the behavior of the likelihood $\ell_\X(\th)$ for large $|\th|$. However, as pointed out in \cite[page~75]{michel-pfanzagl71} concerning the main result there, the nonconstructive compactification condition used in \cite{michel-pfanzagl71,pfanzagl71,pfanzagl73} 
``gives no method for determining [the] value [of the constant in the Berry--Esseen-type bound] for a given family of probability measures.'' 

The problem of controlling the likelihood over far-away zones of a non-compact parameter space $\Th$ was  illustrated in Example~\ref{ex:non-compact}, where the ``bad'' 
situation was excluded by condition \eqref{eq:large th}. 
That same situation was also excluded by the mentioned compactification condition in \cite{michel-pfanzagl71,pfanzagl71,pfanzagl73} -- with $f_\th=-\ln p_\th$ for $\th\in\overline\Th=[-1,\infty]$ and $\mu(\infty):=\lim_{\th\to\infty}\mu(\th)=0=\mu(0)$ and variance $\si^2(\infty):=\lim_{\th\to\infty}\si^2(\th)=1=\si^2(0)$. 

As was pointed out, the method of the present paper is based on the general Berry--Esseen bounds for the multivariate delta method obtained in \cite{nonlinear-publ}, which were apllied here via the bracketing argument delineated in Section~\ref{bracketing}. As such, this method is quite different from the methods in \cite{michel-pfanzagl71,pfanzagl71,pfanzagl73}, specialized to deal with MLEs. 
Partly because of this difference in the methods, there are many differences between the conditions in \cite{michel-pfanzagl71,pfanzagl71,pfanzagl73} and those in the present paper. Most of these differences -- apart from the ones discussed above -- are rather minor. 
Since the result of \cite{pfanzagl71} is apparently the closest to ours in the literature, let us further discuss the regularity conditions in \cite{pfanzagl71}, in comparison with ours, in some detail:  

\begin{remark}\label{rem:compare}
Condition \eqref{diff} in the present paper can be replaced by the condition that $p_\th>0$ everywhere on $\XXX$. The latter condition is necessary in order for $\ell_x(\th)=\ln p_\th(x)$ to be defined for all $x\in\XXX$; cf.\ the first paragraph on page~83 in \cite{pfanzagl71}. 

Our condition \eqref{fisher} follows, by Remark~\ref{rem:fisher}, from regularity conditions (iv), (v)(a), (vi) on pages~83--84 in \cite{pfanzagl71} -- for $f_\th:=-\ell_\th$.   

Next, condition \eqref{M_2} follows from \cite[(vi)]{pfanzagl71}. Here and in the rest of this remark, the lower-case Roman numerals and letters in parentheses refer to the regularity conditions on pages~83--84 in \cite{pfanzagl71} -- again for $f_\th:=-\ell_\th$.   

Next, condition \eqref{M_3} is, in main, a bit stronger than \cite[(viii)]{pfanzagl71}. Of course, condition \eqref{M_3} can be %similarly 
relaxed, for the price of making it more complicated.  

By Remark~\ref{rem:compact}, our condition $(\text{D}_0)$ will hold if the Fisher information $I(\cdot)$ is continuous and strictly positive on $\Th$, for which conditions (ix) and (v)(a), respectively, in \cite{pfanzagl71} will be more than enough. 

Next, our condition $(\text{D}_1)$, to control the behavior of the likelihood $\ell_\X(\th)$ for large $|\th|$, was already discussed at length, versus the compactification condition used in \cite{michel-pfanzagl71,pfanzagl71,pfanzagl73}. 

In the case when $\Th$ is compact, for our condition (B) to hold, either one of regularity conditions (vi)(a) or (vi)(b) in \cite{pfanzagl71} will be more than enough. More generally, condition (B) together with condition $(\text{D}_1)$ replace the just mentioned compactification condition in \cite{michel-pfanzagl71,pfanzagl71,pfanzagl73}.   

In this paper, no explicit analogues of regularity conditions (i), (ii), (iii), (vii) of \cite{pfanzagl71} are imposed. 
\end{remark}

So, quite predictably, neither our conditions imply those in \cite{michel-pfanzagl71,pfanzagl71,pfanzagl73}, nor vice versa.  
However, our conditions appear to be a bit simpler and more explicit overall than those in \cite{michel-pfanzagl71,pfanzagl71,pfanzagl73}. It should also be mentioned that in \cite{michel-pfanzagl71,pfanzagl71} both the relevant conditions and the corresponding results are stated uniformly over compact subsets of $\Th$. Of course, a similar modification of our conditions and results can be done. 

\appendix

\section{On condition (1.1) 
%\protect{\texorpdfstring{\eqref{eq:q(th)}}{eq:q(th)} }  
} \label{append}

In this appendix it will be shown that, under general regularity conditions, \eqref{eq:q(th)} (or even a relaxed version of it) implies that the family of densities $(p_\th)$ is a one-parameter exponential one; thus, condition \eqref{eq:q(th)} is quite restrictive. 
This allows one to give any number of examples where Theorem~\ref{th:main} of the present paper is applicable, whereas \cite[Theorem~3.16]{nonlinear-publ} is not. Here we shall use a result of \cite{ferguson62}, which states that a location family can be a one-parameter exponential family only in two kinds of exceptional cases, when the densities are either normal or certain ``exponential-Gamma'' ones.    

\begin{proposition}\label{prop:exp}
Let us write here $\ell(x,\th)$ in place of $\ell_x(\th)$. 
Assume the following regularity conditions (cf.\ conditions \eqref{diff}--\eqref{M_3} on page~\pageref{diff}). 
\begin{enumerate}[(i)]
	\item \label{Th open} The parameter space $\Th$ is an open interval in $\R$. 
	\item \label{XXX_0} The set $\XXX_{>0}$ is the same as $\XXX$, which latter is an open interval in $\R$. 
	\item \label{mu=Leb} The measure $\mu$ is the Lebesgue measure on $\XXX$. 
	\item \label{exp-smooth} For $(x,\th)\in\XXX\times\Th$, there exist continuous partial derivatives of $\ell(x,\th)$ in $x$ and $\th$ of total order $4$.  
%	$\ell'''_{x\th\th}(x,\th)$ (once in $x$ and twice in $\th$, with the result not depending on the order in which the differentiation operators $\frac{\partial}{\partial x}$ and , $\frac{\partial}{\partial\th}$, $\frac{\partial}{\partial\th}$ are applied), $\ell'''_{\th\th}(x,\th)$, and $\ell'''_{\th\th\th}(x,\th)$. 
		\item \label{ell''} The Fisher information $I(\th)=-\E_\th\ell''_{\th\th}(X,\th)%\in(-\infty,0)
		$ is everywhere finite, nowhere zero, and continuous in $\th\in\Th$. 
		\item \label{ell'' alt} Either 
		\begin{enumerate}
	\item \label{ell'' alt1} $\E_\th\sup\limits_{\tau\in[\th-\vp_\th,\th+\vp_\th]}|\ell'''_{\th\th\th}(X,\tau)|<\infty$ for all $\th\in\Th$ and some real $\vp_\th>0$ or 
	\item \label{ell'' alt2} $\E_\th|X|^{\de_\th}\sup\limits_{\tau\in[\th-\vp_\th,\th+\vp_\th]}|\ell''_{\th\th}(X,\tau)|<\infty$ for all $\th\in\Th$ and some real $\de_\th>0$ and $\vp_\th>0$.  
\end{enumerate}
		\item \label{ell'''} The expected values $\E_\th\ell'''_{\th\th\th}(X,\th)$ and $\E_\th\ell'''_{x\th\th}(X,\th)%\in(-\infty,0)
		$ exist in $\R$ and are %locally integrable 
		continuous in $\th\in\Th$. 
		\item \label{ell''' alt} Either 
		\begin{enumerate}
	\item \label{ell''' alt1} $\E_\th\sup\limits_{\tau\in[\th-\vp_\th,\th+\vp_\th]}|\ell''''_{\th\th\th\th}(X,\tau)|<\infty$ for all $\th\in\Th$ and some real $\vp_\th>0$ or 
	\item \label{ell''' alt2} $\E_\th|X|^{\de_\th}\sup\limits_{\tau\in[\th-\vp_\th,\th+\vp_\th]}|\ell'''_{\th\th\th}(X,\tau)|<\infty$ for all $\th\in\Th$ and some real $\de_\th>0$ and $\vp_\th>0$.  
\end{enumerate}
	%	
%IPP	\item \label{l''_x th} % $\P_\th(\ell''_{x\th}(X,\th)=0)=0$ for $\th\in\Th$.
%	There is no $\th\in\Th$ such that $\ell'_\th(x,\th)$ does not depend on $x$. %\\
%%	!!! discuss: noninformative  
	\item \label{hat th} For each $\xx\in\XXX$ the corresponding MLE value $\hat\th_n(\xx)$ is in $\Th$, so that \break 
	$\sum_{i=1}^n\ell'_\th(x_i,\hat\th_n(\xx))=0$ for all $\xx\in\R^n$.  
		\item \label{consist} The MLE value $\hat\th_n(\X)$ is consistent: for each $\th\in\Th$, $\hat\th_n(\X)$ converges to $\th$ in $\P_\th$-probability.  
\end{enumerate}
Assume finally   
the following relaxed version of the condition \eqref{eq:q(th)} (cf.\ \cite[(3.27)]{nonlinear-publ}):  
there exist a twice continuously differentiable %one-to-one 
function $q\colon\Th\to\R$ with $q'(\th)\ne0$ for all $\th\in\Th$, a Borel function $g\colon\XX\to\R$, and a sequence $(E_n)_{n=1}^\infty$ such that $E_n$ is a Borel subset of $\R^n$ for 
each natural $n$,   
\begin{equation}\label{eq:notin E_n}
 \P_\th(\X\notin E_n)\underset{n\to\infty}\longrightarrow0
\end{equation}
for each $\th\in\Th$,  
and for each natural $n$ and each point $\xx=(x_1,\dots,x_n)\in E_n$ the MLE value $\hat\th_n(\xx)$ is in $\Th$ and satisfies the condition 
\begin{equation}\label{eq:q(th),s}
 q\big(\hat\th_n(\xx)\big)=\frac1n\,\sum_{i=1}^n g(x_i).   
\end{equation} 
%for some Borel function $g\colon\XX\to\R$ and some twice continuously differentiable %one-to-one 
%mapping $q\colon\Th\to\R$ with $q'(\th)\ne0$ for all $\th\in\Th$. %, so that the mapping $q$ is one-to-one. 
%%Finally, suppose that $n\ge3$. 

Then the family of densities $(p_\th)_{\th\in\Th}$ is an exponential one; that is,
\begin{equation}\label{eq:exp-fam}
	\ell(x,\th)=w(\th)T(x)+c(\th)+h(x)
\end{equation}
for some functions $w$, $T$, $c$, and $h$, and all $(x,\th)\in\XXX\times\Th$.   
\end{proposition}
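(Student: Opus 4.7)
The plan is to combine the two relations available on the set $E_n$—namely the first-order condition from \eqref{hat th},
\[
\sum_{i=1}^n \ell'_\th(x_i,\hat\th_n(\xx))=0,
\]
and the assumed identity \eqref{eq:q(th),s}, $q(\hat\th_n(\xx))=\tfrac{1}{n}\sum_{i=1}^n g(x_i)$—to force the Darmois--Koopman--Pitman form \eqref{eq:exp-fam}. The idea is that differentiating both of these in $x_j$ will give $\ell''_{x\th}(x,\th) = a(\th)\,g'(x)$ after eliminating $\partial_{x_j}\hat\th_n$, and then a double integration yields \eqref{eq:exp-fam}.

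First I would promote $E_n$ to an honest open set. Since $\XXX_{>0}=\XXX$ by \eqref{XXX_0} and $\mu$ is Lebesgue measure by \eqref{mu=Leb}, the measure $\P_\th^{\otimes n}$ is equivalent to Lebesgue measure on $\XXX^n$; hence by \eqref{eq:notin E_n} the set $E_n$ has positive Lebesgue measure for all large $n$. Consistency~\eqref{consist} together with the continuity of $I(\cdot)$ from \eqref{ell''} implies that, after shrinking, the Jacobian $\partial_\tau F(\xx,\tau):=\sum_i\ell''_{\th\th}(x_i,\tau)$ is close to $-nI(\tau)$ and hence bounded away from $0$, so the implicit function theorem makes $\hat\th_n(\xx)=\tau(\xx)$ a $C^3$ function on a product-open box $O\subseteq E_n$. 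Substituting $\tau(\xx)$ into \eqref{eq:q(th),s} shows that $\tfrac{1}{n}\sum g(x_i)$ is $C^3$ on $O$; fixing $x_2,\dots,x_n$ and varying $x_1$ on an open interval then upgrades the a priori merely Borel function $g$ to a $C^2$ function on an open subinterval of $\XXX$ (and, by varying which coordinate moves, on a neighborhood of each coordinate projection of $O$).

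Differentiating the two relations with respect to $x_j$ yields
\[
\ell''_{x\th}(x_j,\tau)+\Big(\sum_i\ell''_{\th\th}(x_i,\tau)\Big)\partial_{x_j}\tau=0 \qquad\text{and}\qquad q'(\tau)\,\partial_{x_j}\tau=\frac{g'(x_j)}{n}.
\]
Eliminating $\partial_{x_j}\tau$ (using $q'(\tau)\ne0$) gives
\[
\frac{\ell''_{x\th}(x_j,\tau)}{g'(x_j)}=-\frac{1}{nq'(\tau)}\sum_i\ell''_{\th\th}(x_i,\tau),
\]
whose right-hand side is symmetric in the index $j$. Holding $x_1$ and $\tau$ fixed while moving $x_2$ and compensating with $x_3$ so that $\sum_i g(x_i)$ stays constant (this is possible inside $O$ wherever $g'$ does not vanish) shows that $\ell''_{x\th}(x_2,\tau)/g'(x_2)$ is independent of $x_2$; call this common value $a(\tau)$. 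Continuity of $\ell''_{x\th}$ from \eqref{exp-smooth} and of $g'$, together with connectedness of $\XXX$ and $\Th$ from \eqref{Th open}--\eqref{XXX_0}, extend the identity $\ell''_{x\th}(x,\tau)=a(\tau)\,g'(x)$ from the open set produced above to all of $\XXX\times\Th$. Integrating in $x$ gives $\ell'_\th(x,\tau)=a(\tau)g(x)+b(\tau)$, and integrating once more in $\tau$ gives $\ell(x,\th)=A(\th)g(x)+B(\th)+h(x)$ with $A'=a$, $B'=b$, which is \eqref{eq:exp-fam} with $w=A$, $T=g$, $c=B$.

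The main obstacle is the bookkeeping in the first two stages: extracting a product-open box inside the arbitrary Borel set $E_n$ on which the implicit function theorem applies, upgrading the Borel function $g$ to a $C^2$ object on an open subset of $\XXX$ sufficiently large to permit the independent variation of several coordinates, handling isolated zeros of $g'$ (where the displayed ratio is interpreted through the identity $\ell''_{x\th}(x,\tau)=a(\tau)g'(x)$ rather than as a quotient), and globalising from the local open neighbourhood to all of $\XXX\times\Th$ by continuity and connectedness. Once these are arranged, the final two integrations are routine; I note that Ferguson's theorem is not needed for this proposition itself but is invoked separately to produce concrete examples outside the scope of \cite[Theorem~3.16]{nonlinear-publ}.
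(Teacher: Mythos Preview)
Your elimination argument is clean, and if the differentiation steps were justified it would indeed yield $\ell''_{x\th}(x,\tau)=a(\tau)g'(x)$ and hence the exponential-family form. But the first stage contains a genuine gap: a Borel set $E_n\subseteq\R^n$ of positive (even nearly full) Lebesgue measure need not contain any open box whatsoever---for instance, $E_n$ could be the complement of a dense open set of small measure. So the assertion that one can find a product-open box $O\subseteq E_n$ is unsupported, and without it you cannot differentiate the identity $q(\hat\th_n(\xx))=\tfrac1n\sum g(x_i)$ in $x_j$, upgrade the merely Borel $g$ to a $C^2$ function on an open subinterval, or run the compensating-variation argument (moving $x_2$ while adjusting $x_3$), all of which require the identity to hold on an open set.

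The paper's proof circumvents this in two ways that differ substantially from your plan. First, rather than seeking an open subset of $E_n$, it proves a combinatorial lemma (Lemma~\ref{lem:box}): any Borel set in which a vector of independent continuous r.v.'s falls with positive probability must contain the $2^n$ vertices of some nondegenerate coordinate box. Applying this inside arbitrarily small neighbourhoods of each point of (a suitably shrunk) $E_n$ and using \emph{finite differences} along those vertices---never derivatives of $g$---yields $\frac{\partial^2}{\partial x_1\partial x_2}q(\hat\th_n(\xx))=0$ on $E_n$. Implicit differentiation of the score equation alone then gives an identity \eqref{eq:main ident} expressed purely through $\ell$ and $q$. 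Second, to obtain the identity for \emph{every} $\th\in\Th$ (your local argument would at best cover $\tau$ in the range of $\hat\th_n$ on one small box, and your globalisation-by-connectedness sketch does not explain how to propagate in $\th$), the paper replaces $\xx$ by $\X$, sends $n\to\infty$, and invokes consistency plus the law of large numbers; this is exactly where conditions \eqref{ell'' alt}, \eqref{ell'''}, and \eqref{ell''' alt} enter, to control $\tfrac1n F'_\th$ and $\tfrac1n F''_{\th\th}$ in probability. The limiting relation is $\ell'''_{x\th\th}(x,\th)=\tfrac12\psi(\th)\,\ell''_{x\th}(x,\th)$ for an explicit continuous $\psi$, integrated in $\th$ to give \eqref{eq:exp-fam}. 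That your route never touches conditions \eqref{ell'' alt}--\eqref{ell''' alt} is a signal that something structural is missing.
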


%IPP Of the regularity conditions \eqref{Th open}--\eqref{consist}, only condition \eqref{l''_x th} may seem rather unusual. However, its meaning is rather transparent. Indeed, a violation of condition \eqref{l''_x th} means that for some possible ``true'' value $\th$ of the parameter, the score  
%\begin{equation}\label{eq:F}
%	F(\xx,\th):=\sum_{i=1}^n\ell'_\th(x_i,\th)  
%\end{equation}
%does not at all depend on the ``data'' $\xx=(x_1,\dots,x_n)$, and hence the MLE will not at all depend on the data. Thus, condition \eqref{l''_x th} excludes such complete insensitivity of the MLE to the data. 

The proof of Proposition~\ref{prop:exp} 
will be preceded by a lemma, which may be of independent interest. 
To state the lemma, we need some notation:  
For any $\j=(j_1,\dots,j_n)\in\{1,2\}^n$ and any two points $\y\s1=(y\ss11,\dots,y\ss n1)$ and $\y\s2=(y\ss12,\dots,y\ss n2)$ in $\R^n$, let $\y\s\j:=(y\ss1{j_1},\dots,y\ss n{j_n})$. So, the set 
\begin{equation}
	V(\y\s1,\y\s2):=\{\y\s\j\colon\j\in\{1,2\}^n\}
\end{equation}
is the set of all vertices of the box 
$\prod_{i=1}^n[y\ss i1\wedge y\ss i2,\;y\ss i1\vee y\ss i2]$, 
with the edges parallel to the coordinate axes in $\R^n$, and $\y\s1$ and $\y\s2$ are two opposite vertices of the box. 
Now we can state the mentioned lemma: 

\begin{lemma}\label{lem:box}
Let $\ZZ:=(Z_1,\dots,Z_n)$, where $Z_1,\dots,Z_n$ are independent real-valued r.v.'s, with respect to a probability measure $\P$. 
Suppose that the r.v.'s $Z_i$ are continuous, in the sense that $\P(Z_i=z)=0$ for all real $z$ and $i=1,\dots,n$. 
Let $E\subseteq\R^n$ be any Borel set such that $\P(\ZZ\in E)>0$. Then there exist points $\y\s1=(y\ss11,\dots,y\ss n1)$ and $\y\s2=(y\ss12,\dots,y\ss n2)$ in $\R^n$ such that $V(\y\s1,\y\s2)\subseteq E$ and 
$y\ss i1\ne y\ss i2$ for all $i=1,\dots,n$. 
\end{lemma}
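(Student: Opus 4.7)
The plan is to prove the lemma by induction on $n$, with the key step being a Fubini/Cauchy--Schwarz argument that allows us to pass from the $(n-1)$-dimensional to the $n$-dimensional case by first fixing two distinct values of the last coordinate.

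For the base case $n=1$, since $Z_1$ is atomless, any Borel set $E$ with $\P(Z_1 \in E)>0$ must be uncountable (as any countable set has $\P$-measure zero by continuity), and in particular contains at least two distinct points $y_1^{(1)} \ne y_1^{(2)}$, both of which then lie in $E$.

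For the inductive step, assuming the result for $n-1$, write $\ZZ^{(n-1)}:=(Z_1,\dots,Z_{n-1})$ and, for $x_n\in\R$, consider the section $E_{x_n}:=\{\xx\in\R^{n-1}\colon(\xx,x_n)\in E\}$. I would first locate two distinct values $y_n^{(1)},y_n^{(2)}$ of the last coordinate such that the intersection $E_{y_n^{(1)}}\cap E_{y_n^{(2)}}$ carries positive $\P\circ(\ZZ^{(n-1)})^{-1}$-measure. To that end, take independent copies $Z_n^{(1)},Z_n^{(2)}$ of $Z_n$, independent of $\ZZ^{(n-1)}$, and write $E^{\xx}:=\{x_n\colon(\xx,x_n)\in E\}$. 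Conditioning on $\ZZ^{(n-1)}$ and applying Jensen's inequality $\E X^2\ge(\E X)^2$ yields
\begin{equation*}
\P\bigl(\ZZ^{(n-1)}\in E_{Z_n^{(1)}}\cap E_{Z_n^{(2)}}\bigr)
=\E\bigl[\P(Z_n\in E^{\ZZ^{(n-1)}})^2\bigr]
\ge\P(\ZZ\in E)^2>0.
\end{equation*}
Since $Z_n$ is atomless, $Z_n^{(1)}\ne Z_n^{(2)}$ almost surely, so this positive probability event contains the subevent where additionally the two copies are distinct. By Fubini applied to the joint law of $(Z_n^{(1)},Z_n^{(2)})$, there therefore exist distinct real numbers $y_n^{(1)},y_n^{(2)}$ with $\P(\ZZ^{(n-1)}\in E_{y_n^{(1)}}\cap E_{y_n^{(2)}})>0$.

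Now apply the inductive hypothesis to the Borel set $\tilde E:=E_{y_n^{(1)}}\cap E_{y_n^{(2)}}\subseteq\R^{n-1}$ with the r.v.'s $Z_1,\dots,Z_{n-1}$: there exist $\y^{(1)},\y^{(2)}\in\R^{n-1}$ with $y_i^{(1)}\ne y_i^{(2)}$ for $i=1,\dots,n-1$ and all $2^{n-1}$ vertices of their box lying in $\tilde E$. Setting $\y^{(k)}_{\mathrm{new}}:=(\y^{(k)},y_n^{(k)})\in\R^n$ for $k=1,2$, the inclusion in $\tilde E=E_{y_n^{(1)}}\cap E_{y_n^{(2)}}$ forces $(\y^{(\j')},y_n^{(k)})\in E$ for every $\j'\in\{1,2\}^{n-1}$ and every $k\in\{1,2\}$, which is precisely the assertion that all $2^n$ vertices of $V(\y^{(1)}_{\mathrm{new}},\y^{(2)}_{\mathrm{new}})$ lie in $E$; the coordinate-wise distinctness is built in. This completes the induction.

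The only delicate point is the mixing step that produces two distinct ``good'' values of the last coordinate simultaneously, and this is handled cleanly by the conditional second-moment lower bound above combined with atomlessness. The rest is bookkeeping via Fubini.
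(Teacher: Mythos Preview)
Your proof is correct and takes a genuinely different route from the paper's. The paper argues non-inductively: it first uses regularity of Borel probability measures on $\R^n$ to find a box $B$ in which $E$ has relative $\P$-measure exceeding $1-2^{-n}$, passes to the conditional vector $\Y$ on $B$ (whose coordinates remain independent and atomless), takes two independent copies $\Y\s1,\Y\s2$, and notes that the expected number of vertices of $V(\Y\s1,\Y\s2)$ lying in $E$ is $2^n\P(\Y\in E)>2^n-1$, forcing all $2^n$ vertices to lie in $E$ with positive probability. Your induction instead peels off one coordinate at a time; the second-moment bound $\E\bigl[\P(Z_n\in E^{\ZZ^{(n-1)}})^2\bigr]\ge\P(\ZZ\in E)^2$ only ever has to control two ``vertices'' (in a single coordinate direction) at once, so no preliminary density-boosting is needed. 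Your argument is therefore slightly more elementary---it uses only Fubini and Jensen, whereas the paper relies on outer regularity plus a dyadic-cube decomposition of an approximating open set to obtain the high-density box---while the paper's approach is non-inductive and yields the cleaner global statement that two independent copies of a suitably conditioned vector already realize the desired box with positive probability.
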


It is well known and easy to see that the support of a random vector with independent coordinates is a product set. Lemma~\ref{lem:box} states that, moreover, any Borel subset of $\R^n$ in which such a random vector lies with nonzero probability contains a nontrivial product set. 

It is also easy to see that neither the condition of independence nor that of continuity in Lemma~\ref{lem:box} can be dropped. For respective counterexamples, consider (say) (i) the random vector $(Z,\dots,Z)$ in $\R^n$ with $E$ being the set of all vectors of the form $(z,\dots,z)$ in $\R^n$, where $Z$ is any continuous r.v.\ and (ii) the random vector $(Z_1,\dots,Z_n)$ with $E=\{0,1\}^n\setminus\{(0,\dots,0)\}$, where $Z_1,\dots,Z_n$ are independent r.v.'s such that $\P(Z_i=0)=\P(Z_i=1)=1/2$ for all $i=1,\dots,n$. 

\begin{proof}[Proof of Lemma~\ref{lem:box}]
%Take any $\vp\in(0,1-2^{-n})$. 
Since $\P(\ZZ\in E)>0$ and the distribution of a random vector in $\R^n$ is a regular Borel probability measure, there is a box $B=[a_1,b_1]\times\cdots\times[a_n,b_n]\subset\R^n$ such that \begin{equation}\label{eq:reg}
	\P(\ZZ\in B\cap E)>(1-2^{-n})\P(\ZZ\in B)>0. 
\end{equation}
Let now $\Y=(Y_1,\dots,Y_n)$ be any random vector in $\R^n$ such that 
\begin{equation}
	\P(\Y\in A)=\frac{\P(\ZZ\in B\cap A)}{\P(\ZZ\in B)}
\end{equation}
for all Borel sets $A\subseteq\R^n$; clearly, such a random vector $\Y$ exists. Then it is easy to see that the r.v.'s $Y_1,\dots,Y_n$ are independent and continuous,  % $\P(\Y\in B)=1$ and 
and %inequality 
\eqref{eq:reg} can be rewritten as 
\begin{equation}\label{eq:regul}
	\P(\Y\in %B\cap 
	E)>1-2^{-n}. %>0. 
\end{equation} 
Let $\Y\s1$ and $\Y\s2$ be independent copies of the random vector $\Y$. 
Let the random vectors $\Y\s\j$ and the random set $V(\Y\s1,\Y\s2)$ in $\R^n$ be defined similarly to $\y\s\j$ and $V(\y\s1,\y\s2)$, but based on the random vectors $\Y\s1$ and $\Y\s2$ rather than non-random vectors $\y\s1$ and $\y\s2$. 
Consider the r.v. 
\begin{equation}
	N:=\sum_{\j\in\{1,2\}^n}\ii{\Y\s\j\in %B\cap 
	E}, 
\end{equation}
where $\ii\cdot$ denotes the indicator function. 
Note that for each $\j\in\{1,2\}^n$ the random vector $\Y\s\j$ equals $\Y$ in distribution. 
So, in view of \eqref{eq:regul},  
\begin{equation}\label{eq:EN>}
	\E N=2^n\P(\Y\in %B\cap 
	E)>2^n-1;%2^n(1-2^{-n})\P(\Y\in B)=
	%(2^n-1)\P(\Y\in B);  
\end{equation}
here, the expectation $\E$ is of course with respect to the probability measure $\P$. 

It follows that $\P(N=2^n)=\P(N>2^n-1)
>0$. 
%Indeed, otherwise with $\P$-probability $1$ at least one of the $2^n$ indicators $\ii{\Y\s\j\in B\cap E}$ equals $0$, whereas $\ii{\Y\s\j\in B\cap E}\break 
%\le\ii{\Y\s\j\in B}$ for each $\j\in\{1,2\}^n$, so that $\E N\le(2^n-1)%\break 
%%\times
%\P(\Y\in B)$, which contradicts \eqref{eq:EN>}. 
Thus,
\begin{equation}
\begin{aligned}
	0<\P(N=2^n)&%\le
	=\P(\Y\s\j\in E\ \; \forall\j\in\{1,2\}^n) \\ 
	&=\P\big(V(\Y\s1,\Y\s2)\subseteq E\big) \\ 
	&=\P\big(V(\Y\s1,\Y\s2)\subseteq E\text{ and } Y\ss i1\ne Y\ss i2\ \forall i\in\{1,\dots,n\}\big); 
\end{aligned}	
\end{equation}
the last equality here holds because the r.v.'s $Y_i$ (as well as their independent copies $Y\ss i1$ and $Y\ss i2$) are continuous. 
Now the conclusion of Lemma~\ref{lem:box} immediately follows.  
\end{proof}

\begin{proof}[Proof of Proposition~\ref{prop:exp}]
%Without loss of generality (w.l.o.g.), $n\ge3$. 
Let 
\begin{equation}\label{eq:F}
	F(\xx,\th):=\sum_{i=1}^n\ell'_\th(x_i,\th).  
\end{equation}
%Here and in the rest of 
In this proof, by default $\xx=(x_1,\dots,x_n)$ and $\th$ denote arbitrarily elements of $\XXX^n$ and $\Th$, respectively. 
By condition \eqref{ell''} and the law of large numbers, w.l.o.g.  
\begin{equation}\label{eq:ne0}
F'_\th(\xx,\th)=\sum_{i=1}^n\ell''_{\th\th}(x_i,\th)\ne0\text{ for }\xx\in E_n;  	
\end{equation}
otherwise, decrease the set $E_n$ by a set (say $H_n$) of negligible probability -- such that $\P_\th(\X\in H_n)\to0$ as $n\to\infty$. 
Here and in the rest of this proof, the lower-case Roman numerals and letters a and b in parentheses refer to the conditions in the statement of Proposition~\ref{prop:exp}, or parts (a) and (b) of those conditions. 

Moreover, again by the regularity of the distribution of $\X$, the set $E_n$ may be replaced by a large enough compact subset of $E_n$. So, w.l.o.g.\ the set $E_n$ is compact. 
Further, that compact set $E_n$ can be replaced by its intersection with the support set %(say $S_n$) 
of the ``restriction'' of the distribution of $\X$ to $E_n$ defined by the formula $\mathcal{B}(\R^n)\ni  A\mapsto\P_\th(\X\in A\cap E_n)$, where $\mathcal{B}(\R^n)$ denotes the Borel sigma-algebra over $\R^n$; by condition \eqref{XXX_0}, the mentioned support set does not depend on $\th$. Thus, w.l.o.g.\ the set $E_n$ is compact (and hence closed), and for each $\xx\in E_n$ and each neighborhood $U$ of $\xx$ one has $\P_\th(\X\in U\cap E_n)>0$, whence,  
by Lemma~\ref{lem:box}, there exist points $\y\s1=(y\ss11,\dots,y\ss n1)$ and $\y\s2=(y\ss12,\dots,y\ss n2)$ in $\R^n$ such that $V(\y\s1,\y\s2)\subseteq U\cap E_n$ and 
$y\ss i1\ne y\ss i2$ for all $i=1,\dots,n$. 
So, by condition \eqref{eq:q(th),s} (assumed to hold for $\xx\in E_n$), 
\begin{equation}\label{eq:fin diff}
	q\big(\hat\th_n(\y\s{\j_{1,1}})\big)
	-q\big(\hat\th_n(\y\s{\j_{1,2}})\big)
	-q\big(\hat\th_n(\y\s{\j_{2,1}})\big)
	+q\big(\hat\th_n(\y\s{\j_{2,2}})\big)=0, 
\end{equation}
where $\j_{j_1,j_2}:=(j_1,j_2,1,\dots,1)\in\{1,2\}^n$ for $(j_1,j_2)\in\{1,2\}^2$. 
%, $\j_{12}:=(1,2,1,\dots,1)$, $\j_{21}:=(2,1,1,\dots,1)$, $\j_{22}:=(2,2,1,\dots,1)$. 
Taking now smaller and smaller neighborhoods $U$ of the point $\xx$, one will make all the four points $\y\s{\j_{j_1,j_2}}$ with $(j_1,j_2)\in\{1,2\}^2$ converge to $\xx$. 
Dividing both sides of \eqref{eq:fin diff} by $(y\ss11-y\ss12)(y\ss21-y\ss22)$, in the limit we will have 
\begin{equation}\label{eq:d x1,x2}
	\frac{\partial^2}{\partial x_1\partial x_2}q\big(\hat\th_n(\xx)\big)=0 \text{ for }\xx\in E_n; 
\end{equation}
note that the second-order partial derivative in \eqref{eq:d x1,x2} exists 
because (i) the function $q$ was assumed to be twice continuously differentiable and (ii) $\hat\th_n(\xx)$ is twice continuously differentiable by 
the implicit function theorem, in view of conditions \eqref{exp-smooth} 
%of Proposition~\ref{prop:exp} 
and \eqref{eq:ne0}.  

In fact, by \eqref{eq:F} and \eqref{hat th}, $F(\xx,\hat\th(\xx))=0$ for all $x\in\XXX$, where $\hat\th(\xx):=\hat\th_n(\xx)$. 
Differentiating the identity $F(\xx,\hat\th(\xx))=0$ in $x_1$ and $x_2$, we have 
\begin{equation}\label{eq:hat th'}
F'_{x_i}+F'_\th\,\hat\th'_{x_i}=0 	
\end{equation}
for $i=1,2$, and then
\begin{equation}\label{eq:F'',F'}
	%F''_{x_1,x_2}+
	F''_{x_1,\th}\,\hat\th'_{x_2}+F''_{x_2,\th}\,\hat\th'_{x_1}
	+F''_{\th\th}\,\hat\th'_{x_1}\hat\th'_{x_2}+F'_\th\,\hat\th''_{x_1,x_2}=0;
\end{equation}
here the argument $(\xx,\hat\th(\xx))$ is omitted for brevity everywhere, and it is taken into account that, by \eqref{eq:F}, $F''_{x_1,x_2}=0$. 
On the other hand, by \eqref{eq:d x1,x2}, 
\begin{equation}\label{eq:q'',q'}
	q''(\hat\th)\,\hat\th'_{x_1}\hat\th'_{x_2}+q'(\hat\th)\,\hat\th''_{x_1,x_2}=0\text{ for }\xx\in E_n.  
\end{equation}
Multiplying equations \eqref{eq:F'',F'} and \eqref{eq:q'',q'} respectively by $q'(\hat\th)$ and $F'_\th$ and then subtracting one of the resulting equations from the other, we eliminate $\hat\th''_{x_1,x_2}$:  
\begin{equation}
	q'(\hat\th)\,(F''_{x_1,\th}\,\hat\th'_{x_2}+F''_{x_2,\th}\,\hat\th'_{x_1})
	+\big(q'(\hat\th)F''_{\th\th}-q''(\hat\th)F'_\th\big)\hat\th'_{x_1}\hat\th'_{x_2}=0  
\end{equation}
for $\xx\in E_n$. 
Multiply now the latter equation by $(F'_\th)^2/n$ and use \eqref{eq:hat th'} to eliminate $\hat\th'_{x_1}$ and $\hat\th'_{x_2}$:  
\begin{equation}\label{eq:main ident}
	q'(\hat\th)\,(F''_{x_1,\th}\,F'_{x_2}+F''_{x_2,\th}\,F'_{x_1})\frac{F'_\th}n
	+\Big(q'(\hat\th)\frac{F''_{\th\th}}n-q''(\hat\th)\frac{F'_\th}n\Big)F'_{x_1}F'_{x_2}=0 
\end{equation}
for $\xx\in E_n$. 
In the latter equation, replace $\xx$ by $\X$ and let $n\to\infty$. 
Then 
\begin{equation}\label{eq:q',q''}
 q'(\hat\th_n(\X))\tto{n\to\infty} q'(\th)\quad\text{and}\quad q''(\hat\th_n(\X))\tto{n\to\infty} q''(\th)  	
\end{equation}
in $\P_\th$-probability, by condition \eqref{consist} and because the function $q$ was assumed to be twice continuously differentiable. 
Similarly, by conditions \eqref{exp-smooth} and \eqref{consist}, for $i=1,2$ 
\begin{equation}\label{eq:F'_th,F'_x,th}
\begin{aligned}
 F'_{x_i}(\X,\hat\th_n(\X))&=\ell''_{\th x}(X_i,\hat\th_n(\X))\tto{n\to\infty} \ell''_{\th x}(X_i,\th), \\ 
% \quad\text{and}\quad 
 F'_{x_i,\th}(\X,\hat\th_n(\X))&=\ell'''_{x\th\th}(X_i,\hat\th_n(\X))\tto{n\to\infty} \ell'''_{x\th\th}(X_i,\th)
\end{aligned}   	
\end{equation}
in $\P_\th$-probability. 

Fix for a moment any $\th\in\Th$ and take any $\vp\in(0,\vp_\th)$, where $\vp_\th$ is as in condition \eqref{ell'' alt}. 
By the definition \eqref{eq:F}, %for each real $\vp>0$, 
on the event $\{|\hat\th_n(\X)-\th|\le\vp\}$ one has 
%condition \eqref{ell''}, and 
%the law of large numbers, 
\begin{equation}\label{eq:F'th=}
	\frac{F'_\th(\X,\hat\th_n(\X))}n=G_{1,n}+G_{2,n}, 
\end{equation}
where	
\begin{equation}\label{eq:G_1}
	G_{1,n}:=\frac1n\,\sum_{i=1}^n\ell''_{\th\th}(X_i,\th)\underset{n\to\infty}\longrightarrow
	\E_\th\ell''_{\th\th}(X,\th)=-I(\th)
\end{equation} 
in $\P_\th$-probability, 
by \eqref{ell''} and the law of large numbers, and 
\begin{equation}\label{eq:G_2}
 	|G_{2,n}|\le\tilde G_{2,n}:=\frac1n\,\sum_{i=1}^n W_i, 
 	\quad\text{with}\quad 
 	W_i:=\sup_{\tau\in[\th-\vp,\th+\vp]}|\ell''_{\th\th}(X_i,\tau)-\ell''_{\th\th}(X_i,\th)|. 
\end{equation}
Next, $0\le W_i\le\vp V_{3,i}$, where $V_{3,i}:=\sup_{\tau\in[\th-\vp_\th,\th+\vp_\th]}|\ell'''_{\th\th\th}(X_i,\tau)|$. 

If the alternative \eqref{ell'' alt1} of condition \eqref{ell'' alt} holds, so that $c_{3,\th}:=\E_\th V_{3,1}<\infty$, then, again by the law of large numbers, 
$\tilde G_{2,n}\underset{n\to\infty}\longrightarrow\vp c_{3,\th}$ in $\P_\th$-probability. Since $\vp>0$ can be made arbitrarily small, it follows, in view of condition~\eqref{consist} and relations \eqref{eq:F'th=}, \eqref{eq:G_1}, and \eqref{eq:G_2} (which hold on the event $\{|\hat\th_n(\X)-\th|\le\vp\}$), that  
\begin{equation}\label{eq:F'th to}
	\frac{F'_\th(\X,\hat\th_n(\X))}n\tto{n\to\infty}-I(\th) 
\end{equation}
in $\P_\th$-probability -- when the alternative \eqref{ell'' alt1} of condition \eqref{ell'' alt} holds. 

Otherwise, the alternative \eqref{ell'' alt2} of condition \eqref{ell'' alt} must hold. Then take any real $A>0$ and write 
\begin{equation}\label{eq:W}
W_i\le w_{1,\vp,A}+W_{2,i,\vp,A}, 	
\end{equation}
where 
\begin{equation}
\begin{aligned}
	w_{1,\vp,A}&:=\sup_{|x|\le A,\tau\in[\th-\vp,\th+\vp]}|\ell''_{\th\th}(x,\tau)-\ell''_{\th\th}(x,\th)|, \\ 
	W_{2,i,\vp,A}&:=2\sup_{\tau\in[\th-\vp,\th+\vp]}|\ell''_{\th\th}(X_i,\tau)|\ii{|X_i|>A}. 
\end{aligned}	
\end{equation}
By \eqref{exp-smooth}, $\ell''_{\th\th}(x,\th)$ in continuous and hence uniformly continuous in $(x,\th)$ in any compact subset of the set $\XXX\times\Th$. So, 
\begin{equation}\label{eq:w_1}
	w_{1,\vp,A}\tto{\vp\downarrow0}0. 
\end{equation}
Next, again by the law of large numbers,  
\begin{equation}\label{eq:W_2i}
	\frac1n\,\sum_{i=1}^n W_{2,i,\vp,A}\tto{n\to\infty}\E_\th W_{2,1,\vp,A}
\end{equation}
in $\P_\th$-probability. 
On the other hand, $\E_\th W_{2,1,\vp,A}\le 2c_{2,\th}/A^{\de_\th}\tto{A\to\infty}0$, where $c_{2,\th}:=\E_\th|X|^{\de_\th}\sup_{\tau\in[\th-\vp,\th+\vp]}|\ell''_{\th\th}(X,\tau)|<\infty$ 
by the alternative \eqref{ell'' alt2} of condition \eqref{ell'' alt}. 
Therefore, in view of \eqref{eq:F'th=}, \eqref{eq:G_1}, \eqref{eq:G_2}, \eqref{eq:W}, \eqref{eq:w_1}, \eqref{eq:W_2i}, and condition \eqref{consist}, \eqref{eq:F'th to} holds as well under the alternative \eqref{ell'' alt2} of condition \eqref{ell'' alt}. 

Quite similarly -- but using condition \eqref{ell''' alt} instead of \eqref{ell'' alt} -- one verifies that 
\begin{equation}\label{eq:F''th to}
	\frac{F''_\th(\X,\hat\th_n(\X))}n\tto{n\to\infty}\E_\th\ell'''_{\th\th\th}(X,\th)  
\end{equation}
in $\P_\th$-probability. 
 
Recall now identity \eqref{eq:main ident}; make there the limit ``substitutions'' in accordance with \eqref{eq:q',q''}, \eqref{eq:F'_th,F'_x,th}, \eqref{eq:F'th to}, and \eqref{eq:F''th to}; finally, recall condition  
%Recall now that the function $q$ is twice continuously differentiable with $q'(\th)\ne0$ for all $\th\in\Th$ and 
%use the definition \eqref{eq:F} of $F$, the law of large numbers, and the conditions \eqref{consist}, \eqref{ell''}, \eqref{ell'''}, \eqref{eq:notin E_n}, \eqref{XXX_0}, and 
\eqref{mu=Leb} -- to conclude that 
\begin{equation}\label{eq:ell'',ell'''}
	\ell'''_{x\th\th}(x_1,\th)\ell''_{x\th}(x_2,\th)
	+\ell'''_{x\th\th}(x_2,\th)\ell''_{x\th}(x_1,\th)
	=\psi(\th)\ell''_{x\th}(x_1,\th)\ell''_{x\th}(x_2,\th)  
\end{equation}
for each $\th\in\Th$ and 
(Lebesgue-)almost all $(x_1,x_2)\in\XXX^2$, 
where 
\begin{equation}\label{eq:psi}
	\psi(\th):=-\frac{\E_\th\ell'''_{\th\th\th}(X,\th)}{I(\th)}-\frac{q''(\th)}{q'(\th)}. 
\end{equation}
So, by condition \eqref{exp-smooth}, identity \eqref{eq:ell'',ell'''} holds for all $\th\in\Th$ and all $(x_1,x_2)\in\XXX^2$. 

Consider the set 
\begin{equation*}
	\Th_0:=\{\th\in\Th\colon\ell''_{x\th}(x,\th)=0\ \forall x\in\XXX\}. 
\end{equation*}
If $\Th_0$ contains a nonempty open interval $J$, then integration of the differential equation $\ell''_{x\th}(x,\th)=0$ yields $\ell(x,\th)=c(\th)+h(x)$ and hence $p_\th(x)=e^{c(\th)}e^{h(x)}$ for some functions $c$ and $h$ and all $(x,\th)\in\XXX\times J$. Since $\int_\XXX p_\th(x)\mu(\dd x)=1$ for all $\th$, it follows that the probability density function $p_\th$ is the same for all $\th\in J$, which contradicts the assumption in \eqref{ell''} that the Fisher information is nowhere zero. 

Thus, the set $\Th\setminus\Th_0$ is dense in $\Th$. 
Fix for a moment any $\th\in\Th\setminus\Th_0$. If there is some $x_1\in\XXX$ such that $\ell''_{x\th}(x_1,\th)=0$ but $\ell'''_{x\th\th}(x_1,\th)\ne0$, then, by \eqref{eq:ell'',ell'''}, $\ell''_{x\th}(x_2,\th)=0$ for all $x_2\in\XXX$, which contradicts the condition %\eqref{l''_x th}
$\th\in\Th\setminus\Th_0$. So, we have the implication 
\begin{equation}\label{eq:implic}
	\ell''_{x\th}(x,\th)=0\implies\ell'''_{x\th\th}(x,\th)=0
\end{equation}
for all $x\in\XXX$ and $\th\in\Th\setminus\Th_0$. 

Introduce now the set $S_\th:=\{x\in\XXX\colon\ell''_{x\th}(x,\th)\ne0\}$. 
For any $x_1$ and $x_2$ in $S_\th$, equality \eqref{eq:ell'',ell'''} can be rewritten as 
\begin{equation}\label{eq:ell'''/ell''}
	\frac{\ell'''_{x\th\th}(x_1,\th)}{\ell''_{x\th}(x_1,\th)}
	+\frac{\ell'''_{x\th\th}(x_2,\th)}{\ell''_{x\th}(x_2,\th)} 
	=\psi(\th),  
\end{equation}
which shows that $\frac{\ell'''_{x\th\th}(x,\th)}{\ell''_{x\th}(x,\th)}=\frac12\,\psi(\th)$ and hence 
\begin{equation}\label{eq:ident}
	\ell'''_{x\th\th}(x,\th)=\frac12\,\psi(\th)\ell''_{x\th}(x,\th), 
\end{equation}
for all $\th\in\Th\setminus\Th_0$ and $x\in S_\th$. 
Actually, in view of \eqref{eq:implic}, identity \eqref{eq:ident} holds for all $\th\in\Th\setminus\Th_0$ and $x\in\XXX$. 
Recalling now that the set $\Th\setminus\Th_0$ is dense in $\Th$ and using the smoothness condition \eqref{exp-smooth}, we conclude that \eqref{eq:ident} holds for all $\th\in\Th$ and $x\in\XXX$.
 
Let now $s(\th):=\exp(-\frac12\,\int\psi(\th)\dd\th)$, where $\int\psi(\th)\dd\th$ denotes an arbitrary antiderivative for $\psi(\th)$; such an antiderivative exists because, by \eqref{eq:psi} and conditions \eqref{ell''}, \eqref{ell'''}, together with the conditions on $q$ in the statement of Proposition~\ref{prop:exp}, the function $\psi$ is real-valued and continuous. 

\rule{0pt}{0pt}\big(The continuity of $\psi$ may seem innocuous. However, this condition is very important and takes some effort to obtain, as the reasoning from \eqref{eq:q',q''} to \eqref{eq:psi} shows; one may also note that conditions \eqref{ell'' alt} and \eqref{ell''' alt} are needed only for that reasoning. It would be much easier to derive identity \eqref{eq:ident} from \eqref{eq:main ident} without having to show that an antiderivative for $\psi(\th)$ exists.\big)  

Then equation \eqref{eq:ident} can be rewritten as $\big(s(\th)\ell''_{x\th}(x,\th)\big)'_\th%\break 
=0$, so that $T_1(x):=s(\th)\ell''_{x\th}(x,\th)$ does not depend on $\th$. So, letting $w_1(\th):=1/s(\th)=\exp(\frac12\,\int\psi(\th)\dd\th)$, we have $\ell''_{x\th}(x,\th)=w_1(\th)T_1(x)$. Integrating this differential equation in $x$ and $\th$, we conclude that \eqref{eq:exp-fam} holds 
%\begin{equation}
%	\ell(x,\th)=w(\th)T(x)+c(\th)+h(x)
%\end{equation}
for some functions $c$ and $h$, with $w$ and $T$ being antiderivatives for $w_1$ and $T_1$, respectively. 
%Since $\ell(x,\th)=\ln p_\th(x)$, the proof of Proposition~\ref{prop:exp} is now complete. 
The proof of Proposition~\ref{prop:exp} is now complete.  
\end{proof}

\bigskip

%!!! case $\ell''_{x\th}(x,\th)\equiv0$
%
%!!! exponential families that are location families
%
%\cite{resnick-extreme}
%
%\newpage

By \cite[Theorem~2]{ferguson62}, a family of (say continuous)
probability densities $(p_\th)_{\th\in\R}$ (with respect to the Lebesgue measure on $\XXX=\R$) is simultaneously a location family and a one-parameter exponential family if and only if $p_\th(x)=p(x-\th)$ for all real $x$ and $\th$, where either 
\begin{equation}
	p(u)=q_{\al,\ga}(u):=\frac{|\ga|(\al/e)^\al}{\Ga(\al)}\,\exp\{-\al(e^{\ga u}-1-\ga u)\}
\end{equation}
for some $\al\in(0,\infty)$, some $\ga\in\R\setminus\{0\}$, and all real $u$; or 
\begin{equation}
	p(u)=q_{\si^2}(u):=\frac1{\si\sqrt{2\pi}}\,\exp\Big\{-\frac{u^2}{2\si^2}\Big\}
\end{equation}
for some $\si\in(0,\infty)$ and all real $u$. 

Clearly, $q_{\si^2}$ is the density of $N(0,\si^2)$. One the other hand, $q_{\al,\ga}$ is the density of the r.v.\ $\frac1\ga\,\ln Y_\al$, where $Y_\al$ is a r.v.\ having the Gamma distribution with the shape and scale parameters equal $\al$ and $1/\al$, respectively. 
For any fixed $\si\in(0,\infty)$, letting $\al$ and $\ga$ vary %in any manner such 
so that $\al\to\infty$ and $\al\ga^2\to1/\si^2$, we see 
that, by the central limit theorem, the distribution of $\sqrt\al(Y_\al-1)$ converges to $N(0,1)$ and hence %, by the delta method, 
the distribution of $\sqrt\al\,\ln Y_\al=\sqrt\al(Y_\al-1)\,\frac{\ln Y_\al}{Y_\al-1}$ converges to $N(0,1)$ as well; so, the distribution of the mentioned r.v.\ $\frac1\ga\,\ln Y_\al$ converges to $N(0,\si^2)$. 
Moreover, using Stirling's formula, one can see (as was done in \cite{ferguson62}) that $q_{\al,\ga}(u)\to q_{\si^2}(u)$ for each real $u$ as $\al\to\infty$ and $\al\ga^2\to1/\si^2$; thus, the normal density $q_{\si^2}$ is a limit case of the ``exponential-Gamma'' density $q_{\al,\ga}$.  

Therefore and in view of Proposition~\ref{prop:exp} (and also Remarks~\ref{rem:fisher} and \ref{rem:compact} and Proposition~\ref{prop:shift}), taking almost any smooth enough location family, except for the normal and ``exponential-Gamma'' ones, one has an example where Theorem~\ref{th:main} of the present paper is applicable, whereas \cite[Theorem~3.16]{nonlinear-publ} is not. 
For instance, one may take the Cauchy location family, with $p_\th(x)=\frac1\pi\,\frac1{1+(x-\th)^2}$, or 
the location family defined by the formula $p_\th(x)=\frac1{2\Ga(5/4)}\,\exp\{-(x-\th)^4\}$, for all real $x$ and $\th$.  
An additional advantage of Theorem~\ref{th:main} of the present paper over \cite[Theorem~3.16]{nonlinear-publ} is that now one does not have to check a special, restrictive condition of the form 
%\eqref{eq:q(th),s} or 
\eqref{eq:q(th)} %-- whether it holds or not. 
even when it holds.

%\section{Proofs}\label{sec:proofs}

%\bigskip
%\textbf{Acknowledgment.} {I am pleased to thank Lutz Mattner for references to \cite{michel-pfanzagl71,pfanzagl71,pfanzagl73}.}

\bibliographystyle{imsart-number}
%\bibliography{citations.nodoi}
\bibliography{C:/Users/ipinelis/Google\string~1/mtu/bib_files/citations12.13.12}

%\bibliography{C:/Users/ipinelis/Google Drive/mtu/bib_files/citations12.13.12}
%\bibliography{C:/Users/ipinelis/Dropbox/mtu/bib_files/citations12.13.12}
%\bibliography{C:/Users/Iosif/Dropbox/mtu/bib_files/citations12.13.12}
%\bibliography{C:/Users/iosif-home-2011/Dropbox/mtu/bib_files/citations12.13.12}

\end{document}